\newcommand{\IR}{{\mathbb{R}}}
\newcommand{\IN}{{\mathbb{N}}}
\newcommand{\dd}{n}
\newcommand{\LB}{\Delta}
\newcommand{\posR}{\mathbb{R}^+}
\newcommand{\nnegR}{\mathbb{R}^+_0}
\newcommand{\sol}{u \in C^1( V \times \nnegR)}
\newcommand{\downto}{{\searrow}}
\newcommand{\eChar}{\begin{enumerate}[(i)]}
\newcommand{\eBr}{\begin{enumerate}[(1)]}
\newcommand{\ii}{j(i)}
\newcommand{\zi}{z_{i'}}
\newcommand{\LL}{\mathcal L}
\newcommand{\II}{\posR}
\newcommand{\ddx}{\frac{d}{dx}}
\newcommand{\Abstract}
{
We introduce a new version of a curvature-dimension inequality for non-negative curvature. We use this inequality to prove a logarithmic Li-Yau inequality on finite graphs.
To formulate this inequality, we introduce a non-linear variant of the calculus of Bakry and Émery. In the case of manifolds, the new calculus and the new curvature-dimension inequality coincide with the common ones. 
In the case of graphs, they coincide in a limit.
In this sense, the new curvature-dimension inequality gives a more general concept of curvature on graphs and on manifolds. We show that Ricci-flat graphs have a non-negative curvature in this sense. Moreover, a variety of non-logarithmic Li-Yau type gradient estimates can be obtained by using the new Bakry-Émery type calculus. Furthermore, we use these Li-Yau inequalities to derive Harnack inequalities on graphs.
}
\title
{
Li-Yau inequality on finite graphs via non-linear curvature dimension conditions
}
\author{Florentin Münch}
\date{\today}
\theoremstyle{plain}
\newtheorem{lemma}{Lemma}[section]
\newtheorem{theorem}[lemma]{Theorem}
\newtheorem{proposition}[lemma]{Proposition}
\newtheorem{corollary}[lemma]{Corollary}
\theoremstyle{definition}
\newtheorem{example}[lemma]{Example}
\newtheorem{rem}[lemma]{Remark}
\newtheorem{defn}[lemma]{Definition}
\numberwithin{equation}{section}
\begin{document}

\maketitle

\begin{abstract}
\Abstract
\end{abstract}
\tableofcontents




\pagestyle{plain}


\section{Introduction}


  
In 1986, Li and Yau proved a gradient estimate for manifolds, later known  as Li-Yau inequality (cf. \cite{Li1986}). This states that for positive solutions $u$ to the heat equation $\LL(u) = 0$ (with $\LL = \Delta - \partial_t$) on a $d$-dimensional compact manifold without boundary, the following implication holds.
\begin{equation}
Ricc \geq 0 \Longrightarrow - \Delta \log u (x,t) \leq \frac d {2t} \label{LY86}.
\end{equation}
The term $Ricc$ denotes the Ricci curvature of the manifold.
An important application of this inequality is the Harnack inequality (diffusion) which can be seen as an integrated form of the Li-Yau inequality.
The different forms of the Harnack inequality (logarithmic, harmonic, diffusion) are closely related.

In 2006, Bakry and Ledoux generalized the Li-Yau inequality result to more general Laplacians that satisfy the chain rule and improved the result by using a curvature-dimension inequality ($CD$-inequality) instead of the non-negative Ricci curvature and by giving a characterization of the $CD$-inequality via a logarithmic Sobolev inequality containing the Li-Yau inequality (cf. \cite{Bakry2006}). The $CD$-inequality will be introduced in the next subsection.

After Li's and Yau's breakthrough in 1986, great effort was made to establish an analog result on graphs. 
This turned out to be very complicated since all known proofs of the Li-Yau inequality had made an extensive use of the chain rule, but this chain rule does not hold on graphs.
As a first step to consider graphs with non-negative curvature, Chung and Yau introduced the concept of Ricci-flat graphs in 1996 (cf. \cite{Chung1996}) and obtained the following results for graphs.
\begin{equation}
d \mbox{-Ricci-flat} \Longrightarrow \mbox{Harnack inequality (logarithmic)}. \nonumber
\end{equation}
Ricci-flat graphs are a slight generalization of Abelian Cayley graphs (cf. Subsection \ref{SRF}). It remains unclear, whether there are Ricci-flat graphs which are not Abelian Cayley graphs.

In 2012, Chung, Lin, and Yau found a first Li-Yau type result using the $CD(d,0)$ inequality on graphs (cf. \cite{Chung2012}). They showed for graphs
\begin{eqnarray}
d\mbox{-Ricci-flat} &\Longrightarrow& CD(d,0) \\
									  &\Longrightarrow&  \mbox{ Harnack inequality (harmonic). }  \nonumber
\end{eqnarray}

One year later, in 2013, Bauer, Horn, Lin, Lippner, Mangoubi, and Yau proved a result on graphs which is very similar to the original Li-Yau inequality from 1986 (cf. \cite{Bauer2013}). This result is the following. If $G$ is a graph and $\sol$ is a positive solution to the heat equation on $G$, then
\begin{eqnarray}
d\mbox{-Ricci-flat} &\Longrightarrow&   CDE(d,0) \\
										&\Longrightarrow&   \frac {\Gamma(\sqrt u)}{u} - \frac {\partial_t \sqrt u}{\sqrt u} \leq \frac d {2t} \label{BLY} \\
										&\Longrightarrow&   \mbox{Harnack inequality (diffusion)}  . \nonumber
\end{eqnarray}
The $CDE(d,0)$ condition is the exponential curvature-dimension inequality, a substitute of $CD(d,0)$. 
The gradient form $\Gamma$ has been introduced by Bakry and Émery (cf. \cite{Bakry1985}). We will define this in the next subsection.
There are many generalizations of this statement. In \cite{Bauer2013}, general curvature bounds and potentials have been discussed. In \cite{Qian2013}, the gradient estimate have been proven with time dependent coefficients.
Obviously, the gradient estimate (\ref{BLY}) which we will call the $\sqrt{\cdot}$ Li-Yau inequality, has a different form than the logarithmic Li-Yau inequality (\ref{LY86}). Additionally, we can see in the following that in the $CDE$ inequality, there is a break of analogy to the original result on manifolds. In the examples section (Section \ref{CExamples}), we will explain this break of analogy and why it occurs. 
They overcome the missing chain rule in a remarkable way by observing that a version of the chain rule for the square root surprisingly also holds on graphs.

In this article, we prove an inequality on graphs which has the same form as the logarithmic Li-Yau inequality (\ref{LY86}) from 1986. 
To do so, we introduce a new version of the $CD$ inequality to avoid the use of the chain rule (cf. Subsection \ref{SCD}).
We will show
\begin{eqnarray}
 d C_{\log}  \mbox{-Ricci-flat} &\Longrightarrow&  CD \log(d,0) \\ 
 															&\Longrightarrow&  - \Delta \log u (x,t) \leq \frac d {2t} \label{logLY} \\ 
 															&\Longrightarrow&  \mbox{Harnack inequality (diffusion)},      \nonumber
\end{eqnarray}
where $\sol$ is a positive solution to the heat equation, and $C_{\log}$ is a positive constant. The $CD \log(d,0)$ inequality is a new curvature-dimension condition on graphs. If there is a chain rule for the Laplacian as in the case of manifolds, the $CD \log(d,0)$ inequality is equivalent to the $CD (d,0)$ inequality (cf. Subsection \ref{SMF}).
In the case of graphs, the $CD \log(d,0)$ inequality implies the $CD (d,0)$ inequality (cf. Subsection \ref{SLT}).

When establishing our results, we even introduce a more general concept by replacing the logarithm by a concave function  $\psi \in C^1(\posR)$. This leads us to the non-linear Laplacian $\Delta^\psi$ (cf. Subsection \ref{SDpsi}) as a substitution for the expression $\Delta \log (\cdot)$. With the choice $\psi=\sqrt{\cdot}$ respectively $\psi=\log$, we obtain the logarithmic respectively the $\sqrt{\cdot}$ Li-Yau inequality (cf. Example \ref{Elog}).

As discussed above, one goal of this article is to use these Li-Yau estimates to deduce Harnack-inequalities which have the form
\[
\frac{u(x,T_1)}{u(y,T_2)} \leq C(d(x,y),T_1,T_2)
\]

with a positive solution $\sol$ to the heat equation, positive real numbers $T_1 < T_2$, and a constant $C(d(x,y),T_1,T_2)$ depending only on the distance of $(x,T_1)$ and $(y,T_2)$ in space-time.

It turns out that the Harnack-estimates presented here are stronger than the ones of Bauer, Horn, Lin, Lippner, Mangoubi, and Yau in \cite{Bauer2013} (cf. Corollary \ref{CHR}).

In this article, we use finite graphs as the basis of our discrete setting. In future work, we plan to also consider infinite graphs or even Dirichlet forms as a more general setting.

{\bf Acknowledgements}

I wish to thank Matthias Keller and Daniel Lenz, who have been the supervisors of my master thesis which is presented here in this article, for many useful discussions and for providing a very enjoyable and constructive atmosphere. Moreover, I wish to acknowledge Matthias Keller for proposing the topic of my master thesis. 

\section{Basics}

We are interested in giving discrete analoga of the so called Li-Yau inequality on manifolds. 

\begin{defn}[Graph]
A pair $G=(V,E)$ with a finite set $V$ and a relation $E \subset V\times V$ is called a \emph{finite graph} if $(v,v) \notin E$ for all $v \in V$ and if $(v,w) \in E$ implies $(w,v) \in E$ for $v,w \in V$.

For $v,w \in V$, we write $v \sim w$ if $(v,w) \in E$. In this case, we say that the vertices $v$ and $w$ are \emph{adjacent}.
For $v\in V$, we denote $\deg v := \#\{w \in V : w \sim v\}$. 
\end{defn}

\begin{defn}[Path metric on graphs]
Let $G=(V,E)$ be a finite graph. 
A sequence $(v_0,\ldots,v_n) \in V^{n+1}$ is called a \emph{path} of the length $n$ from $v_0$ to $v_n$ if all $v_i$  are pairwise distinct  and $v_i \sim v_{i-1}$ for all $i \in \{1,\ldots,n\}$.
We call $d:V \times V \to [0,\infty]$,
\[
d(x,y):= \inf \{n \in \IN: \mbox{there is a path of the length $n$ from $x$ to $y$}  \}
\]
the \emph{distance} of $x$ and $y$.
The finite graph $G$ is called \emph{connected} if there is a path from $x$ to $y$ for all $x,y \in V$ .
\end{defn}

A well studied Laplacian on manifolds is the Laplace-Beltrami operator. On graphs, there is an analogon of this operator. 

\begin{defn}[Laplacian $\Delta$]
Let $G=(V,E)$ be a finite graph.
The domain of the Laplacian $\Delta$ is
\[
C(V):= \IR^V := \{f: V \to \IR\ \}.
\]
The \emph{Laplacian} $\Delta : C(V) \to C(V)$ is defined for $f \in C(V)$ and $v \in V$ as
\[
\Delta f (v) := \sum_{w \sim v} (f(w) - f(v)).
\]
\end{defn}

\begin{rem}
Following the definition of Li and Yau in \cite{Li1986}, we deal with the Laplacian with a negative sign.
\end{rem}

We introduce the $\Gamma$-calculus by Bakry and Émery (cf. \cite{Bakry1985}). Note that such a calculus can be defined whenever a sufficiently nice Laplacian is given. 
Especially, it is useful on manifolds and on graphs. On manifolds, this calculus has been studied e.g. in \cite{Bakry1985, Bakry1992, Bakry2006}, and on graphs, it has been studied in \cite{Bauer2012,Bauer2013,Chung2012,Jost2014,Lin2010,Lin2011}. 

\begin{defn}[$\Gamma$-calculus] \label{defGamma}
Let $G=(V,E)$ be a finite graph.
Then, the \emph{gradient form} or \emph{carré du champ} operator $\Gamma : C(V) \times C(V) \to C(V)$ is defined by
\[
2 \Gamma (f,g) := \Delta(fg) - f\Delta g - g\Delta f.
\]
Similarly, the \emph{second gradient form} $\Gamma_2 : C(V) \times C(V) \to C(V)$ is defined by
\[
2 \Gamma_2 (f,g) := \Delta \Gamma (f, g) - \Gamma(f, \Delta g)  - \Gamma (g, \Delta f).
\]
We write $\Gamma (f):= \Gamma (f,f)$ and $\Gamma_2 (f):= \Gamma_2 (f,f)$.
\end{defn}

On manifolds, there is a nice relation between its Ricci curvature and the second gradient form which is a consequence of Bochner's formula (cf. \cite{Bochner1953}). This relation is
\[
\Gamma_2(f) \geq \frac 1 d (\Delta f)^2 + Ricc(\nabla f)
\]
for all $f$, where $d$ is the dimension of the manifold and $Ricc$ is the Ricci curvature.
Especially, if $Ricc \geq 0$, then
\[
\Gamma_2 (f) \geq \frac 1 d (\Delta f)^2.
\]
This motivates to introduce a curvature-dimension inequality on graphs, where no suitable explicit definition of the Ricci curvature is known yet.

\begin{defn}
We write $\posR := (0,\infty)$ and $\nnegR := [0,\infty)$.

Let $G=(V,E)$ be a finite graph. Then, we write
\[
C^+(V) := \{f:V \to \posR\}.
\]
\end{defn}

\begin{defn}[$CD(d,0)$ condition] \label{DCD}
Let $G=(V,E)$ be a finite graph and $d \in \posR$.
We say $G$ satisfies the \emph{curvature-dimension inequality} $CD(d,0)$ if for all $f \in C(V)$, 
\[
\Gamma_2(f) \geq \frac 1 d (\Delta f)^2.
\]
We can interpret this as meaning that that the graph $G$ has a dimension (at most) $d$ and a non-negative Ricci curvature.
\end{defn}

\begin{rem}
This curvature-dimension inequality has been studied on manifolds e.g. by Bakry and Émery in \cite{Bakry1985, Bakry1992, Bakry2006}. On graphs, it has been studied in \cite{Bauer2012,Bauer2013,Chung2012,Jost2014,Lin2010,Lin2011}. There is a great interest in giving generalizations of the Ricci curvature. Apart from the Bakry Émery approach, there is a well studied concept of Ricci curvature on metric measure spaces via optimal transport (cf. \cite{Lott2009,Ollivier2009,Ollivier2010,Sturm2006}).
Connections between these different approaches are given in \cite{Ambrosio2012, Jost2014}. A notion of Ricci curvature on cell complexes by counting neighboring cells is presented in \cite{Forman2003}.
\end{rem}

\begin{defn}[$C^1(I)$ for intervals $I$]
Let $I \subseteq \IR$ be a (not necessarily open) interval and let $\phi : I \to \IR$ be a function. We call $\phi$ (continuously) differentiable if $\phi$ can be 
extended to a (continuously) differentiable function on an open interval $J$ with $I \subset J \subset \IR$. The derivative of $\phi$ is the derivative of that extension of $\phi$ restricted to $I$.
We write 
\begin{eqnarray*}
C^1(I) &:=& \{\phi:  I \to \IR \; | \; \phi \mbox { is continuously differentiable } \},  \\
C(I)   &:=& \{\phi:  I \to \IR \; | \; \phi \mbox { is continuous } \}.
\end{eqnarray*}
\end{defn}

We will be mostly interested in the case $I \in \{\IR, \posR, \nnegR \}$.
Since the Li-Yau inequality deals with solutions to the heat equation, we define the heat operator.

\begin{defn}[Heat operator $\LL$]
Let $G=(V,E)$ be a graph.
The domain of the heat operator $\LL$ is
\[
C^1(V \times \II) := \{u: V \times \II \to \IR \; | \; u \mbox { is continuously differentiable in the second variable} \}.
\]
For $u \in C^1(V \times \II)$ we write
\[
u_t(v):=u(v,t)
\]
for all $v \in V$ and $t \in \II$. 
We call $t \in \II$ the \emph{time}, and we call $v \in V$ the \emph{location} respectively \emph{position}.

The range of the heat operator is
\[
C(V \times \II) := \{u: V \times \II \to \IR \; | \;  u \mbox { is continuous in the second variable} \}.
\]
The \emph{heat operator} is defined by $\LL(u) := \Delta u  - \partial_t u$ for all $u \in C^1(V \times \II)$.

We call a function $\sol$ a \emph{solution to the heat equation} on $G$ if  $\LL(u) = 0$. 
\end{defn}

Later we will also use the heat operator on $C^1(V \times I)$ for an interval $I \subset \IR$. The definition above can be extended to this case in a natural way.

Most proofs of the Li-Yau inequality on manifolds make an extensive use of the chain rule for the Laplace-Beltrami operator given in the next proposition.
\begin{proposition}[Chain rule on manifolds]
Let $\LB$ be the Laplace-Beltrami operator on a complete, connected manifold $M$ (with its Riemannian measure) and $\Phi \in C^\infty( \posR)$ and $f,g \in C^\infty (M)$ with $f>0$. Then, the chain rules 
\begin{eqnarray}
\LB \Phi(f) 		&=& \Phi'(f)  \LB f + \Phi''(f)      			\Gamma (f) \label{CR}, \\
\Gamma(\Phi(f),g) &=& \Phi'(f) \Gamma (f,g)               \label{CRG},\\
\Gamma(f,gh)      &=& h \Gamma (f,g) + g \Gamma (f,h)     \label{CRP}
\end{eqnarray}
are valid.
\end{proposition}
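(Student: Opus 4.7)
The plan is to reduce all three chain rules to standard Riemannian calculus by first establishing the identification
\[
\Gamma(f,g) = \langle \nabla f, \nabla g \rangle,
\]
where $\nabla$ is the Riemannian gradient. This follows directly from the Leibniz rule for the Laplace-Beltrami operator, $\Delta(fg) = f \Delta g + g \Delta f + 2 \langle \nabla f, \nabla g \rangle$, substituted into Definition \ref{defGamma}. Once $\Gamma$ is identified with the inner product of gradients, the remaining work is purely pointwise calculus of smooth functions on the manifold.

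From this identification, (\ref{CRP}) is immediate: by the Leibniz rule for the gradient,
\[
\Gamma(f, gh) = \langle \nabla f, \nabla (gh)\rangle = \langle \nabla f, h \nabla g + g \nabla h \rangle = h \Gamma(f,g) + g \Gamma(f,h).
\]
For (\ref{CRG}), I would invoke the ordinary chain rule for smooth maps, $\nabla \Phi(f) = \Phi'(f) \nabla f$, where the assumption $f > 0$ is precisely what keeps the image of $f$ inside the domain $\posR$ of $\Phi$. Then
\[
\Gamma(\Phi(f), g) = \langle \nabla \Phi(f), \nabla g \rangle = \Phi'(f) \langle \nabla f, \nabla g \rangle = \Phi'(f) \Gamma(f,g).
\]

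For (\ref{CR}), I would write $\Delta = \mathrm{div}\, \nabla$ and apply the product rule for the divergence, $\mathrm{div}(h X) = h\, \mathrm{div}(X) + \langle \nabla h, X \rangle$, to the vector field $X = \nabla f$ with scalar $h = \Phi'(f)$. This gives
\[
\Delta \Phi(f) = \mathrm{div}(\Phi'(f) \nabla f) = \Phi'(f) \Delta f + \langle \nabla \Phi'(f), \nabla f \rangle = \Phi'(f) \Delta f + \Phi''(f) \Gamma(f),
\]
after a second application of the chain rule to $\Phi'$ and using $\Gamma(f) = |\nabla f|^2$.

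There is no real obstacle here; the statement is classical and the proof is just bookkeeping of standard manifold calculus once the translation $\Gamma(f,g) = \langle \nabla f, \nabla g \rangle$ is in place. The only subtle point worth tracking is the need for $f > 0$ so that $\Phi(f)$ is a well-defined smooth function, and the implicit regularity hypothesis that $\Phi \in C^\infty(\posR)$ ensures both $\Phi'$ and $\Phi''$ exist and can themselves be composed with $f$. This contrast with the discrete setting, where no analog of the pointwise chain rule $\nabla \Phi(f) = \Phi'(f) \nabla f$ is available, is exactly what motivates the nonlinear Bakry--\'Emery calculus developed later in the paper.
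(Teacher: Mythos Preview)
Your proof is correct and complete. The paper does not actually prove this proposition; it simply defers to the literature with the sentence ``A proof of this claim is given in \cite{Bakry1992}.'' So your explicit argument via the identification $\Gamma(f,g)=\langle\nabla f,\nabla g\rangle$ and the divergence product rule supplies more than the paper itself does.
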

\begin{proof}
A proof of this claim is given in \cite{Bakry1992}.
\end{proof}
\begin{rem}
There is an established theory of more general Laplace operators which satisfy the three equations above.
These operators are said to be diffusion operators (cf. \cite{Bakry1985, Bakry1992, Bakry2006}).
\end{rem}

Unfortunately, there is no chain rule for the Laplacian on graphs. So, we have to find a way to bypass the chain rule, as we want to prove the Li-Yau inequality.
We will use a key identity for $\Gamma_2$ in the case of manifolds to introduce a new second gradient by which we can avoid the use of the chain rule. This identity states that for all solutions $\sol$ to the heat equation, one has
\[
\LL(u \Delta (\log u)) = -2 u\Gamma_2 (\log u).
\]
Note that this statement is not true in general if there is no chain rule. But in the next section, we will define a scaling invariant Laplacian $\Delta^\psi$ and a second gradient $\Gamma_2^\psi$, such that
\[
\LL(u \Delta^\psi u)) = -2 u\Gamma_2^\psi (u)
\]
holds for all solutions $\sol$ to the heat equation and no chain rule is needed.

\section{The $\Gamma^\psi$-calculus}

We start this section with a short summary of its contents:

As the basis for our new calculus, we will introduce the operator $\Delta^\psi$ as a scaling invariant and non-linear generalization of the Laplacian with the parameter 
$$\psi \in C^1(\posR) =\{\phi: \posR \to \IR   \mbox{ | }  \phi' \mbox{ is continuous} \}.$$
Using this, we introduce new gradients $\Gamma^\psi$ and $\Gamma_2^\psi$.

Let $G=(V,E)$ be a finite graph.
Our goal is to generalize the Li-Yau inequality to a $\psi$-Li-Yau inequality with the form
\begin{equation}
-\Delta^\psi u \leq \frac d {2t} \label{psiLY}
\end{equation}
for all positive solutions $\sol$ to the heat equation which satisfy the $CD\psi(d,0)$ inequality which states
\[
\Gamma_2^\psi (f) \geq \frac 1 d (\Delta^\psi f)^2
\]
for all positive $f \in C(V)$. The second $\psi$-gradient $\Gamma_2^\psi$ will be defined in the $CD\psi$ subsection (Subsection \ref{SCD}).
With appropriate choices of $\psi$, we obtain $\Delta^{\log} = \Delta {\log}$ and $\Delta^{\sqrt{\cdot}}(u)=\frac {\Gamma(\sqrt u)} u - \frac {\Delta u}{2u}$ for all positive functions $u$ (cf. Example \ref{Elog}). Especially with these two choices, the $\psi$-Li-inequality (\ref{psiLY}) turns into the logarithmic Li-Yau inequality (\ref{logLY}), respectively into the $\sqrt{\cdot}$ Li-Yau inequality (\ref{BLY}).

We will introduce a first $\psi$-gradient in Subsection \ref{SGF}. We will need this to prove Harnack inequalities.
In the manifolds subsection (Subsection \ref{SMF}) of this section, we will show among others that, in the case of manifolds for suitable $\psi$, one has

\begin{eqnarray*}
\Delta^\psi &=& \Delta \log (\cdot),\\
\Gamma^\psi &=& \Gamma (\log (\cdot)),\\
\Gamma_2^\psi &=& \Gamma_2 (\log (\cdot)).
\end{eqnarray*}

In the limit theorem subsection (Subsection \ref{SLT}), we will describe the classical operators as a limit of the $\psi$-operators. 
I.e. for all $f \in C(V)$ and all concave $\psi \in C^\infty(\posR)$, one has
\begin{eqnarray*}
\lim_{\varepsilon \to 0} \frac 1 {\varepsilon} \Delta^\psi (1+ \varepsilon f) &=& \psi'(1) \Delta f, \\
\lim_{\varepsilon \to 0} \frac 1 {\varepsilon^2} \Gamma^\psi (1+ \varepsilon f) &=& -\psi''(1) \Gamma(f), \\
\lim_{\varepsilon \to 0} \frac 1 {\varepsilon^2} \Gamma_2^\psi (1+ \varepsilon f) &=& -\psi''(1) \Gamma_2(f). 
\end{eqnarray*}
By this, we show that the $CD\psi$ inequality implies the $CD$ inequality.

We will prove below our key identity
\[
\LL(u \Delta^\psi u)) = -2u\Gamma_2^\psi (u)
\]
which holds for all solutions  $\sol$  to the heat equation.


In the examples section (Section \ref{CExamples}), we will show that Ricci-flat graphs satisfy the $CD\psi(d,0)$ inequality with appropriate $\psi \in C^1(\posR)$ and $d \in \posR$.

We will see that concavity of the function $\psi$ is crucial for deriving Harnack inequalities (cf. Section \ref{CHarnack}) and for proving $CD\psi$ inequalities on Ricci-flat graphs (cf. Subsection \ref{SRF}).

  \subsection{The operator $\Delta^\psi$} \label{SDpsi} 

In this subsection, we will introduce the scaling invariant Laplacian $\Delta^\psi$ as replacement of $\Delta \circ \log$ which appeared in the manifold case \ref{LY86}. We will show in Subsection \ref{SMF} that both coincide in the case of manifolds if $\psi'(1)=1=-\psi''(1)$, as holds e.g. for $\psi = \log$.


\begin{defn}[$\psi$-Laplacian $\Delta^\psi$] \label{DLP}
Let $\psi \in C^1(\posR)$ and let $G=(V,E)$ be a finite graph. Then, we call
$\Delta^\psi : C^+(V) \to C(V)$, defined as 
\[
(\Delta^\psi f ) (v) :=  \left( \Delta \left[ \psi \left( \frac f {f(v)} \right) \right] \right) (v),
\]
the $\psi$\emph{-Laplacian}. 
\end{defn}

We show that the operator $\Delta^\psi$ is scaling invariant in the argument and linear in the parameter $\psi$.
In particular, $\Delta^\psi$ is not a linear operator.
\begin{lemma} \label{Lcalc}
Let $G=(V,E)$ be a graph, $\phi, \psi \in C^1(\posR)$, $a,b \in \IR$, $r \in \posR$, and $f \in C^+(V)$. Then,
\eBr
	\item $\Delta^{a \phi + b \psi} (rf) = a \Delta^\phi f + b \Delta^\psi f$, 
	\item $\Delta^\psi f = 0$ if $f$ is constant or $\psi$  is constant.
\end{enumerate}
\end{lemma}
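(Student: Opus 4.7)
The plan is to unfold the definition of $\Delta^\psi$ and observe that both statements reduce to short algebraic manipulations that follow directly from the construction. Writing out the definition, for any $v \in V$ we have
\[
(\Delta^\psi f)(v) = \sum_{w \sim v} \Bigl[\psi\!\bigl(\tfrac{f(w)}{f(v)}\bigr) - \psi(1)\Bigr],
\]
since $\psi(f(v)/f(v)) = \psi(1)$ and $\Delta$ only sees differences across edges at $v$.

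For part (1), I would split the proof into scaling invariance in $f$ and linearity in $\psi$. Scaling invariance is immediate from the above formula: replacing $f$ by $rf$ with $r \in \posR$ leaves the ratio $(rf)(w)/(rf)(v) = f(w)/f(v)$ unchanged, so $\Delta^\psi(rf)(v) = \Delta^\psi f(v)$. Linearity in $\psi$ follows because the bracketed expression $\psi(f(w)/f(v)) - \psi(1)$ is itself linear in $\psi$, and summing over $w \sim v$ preserves this linearity. Combining these two observations yields
\[
\Delta^{a\phi + b\psi}(rf)(v) = a \Delta^\phi f(v) + b \Delta^\psi f(v).
\]

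For part (2), both cases collapse the summand. If $f$ is constant, then $f(w)/f(v) = 1$ for every $w \sim v$, so each summand equals $\psi(1) - \psi(1) = 0$. If $\psi$ is constant, each summand is trivially zero regardless of $f$.

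There is no real obstacle here; the statement is essentially a bookkeeping check that the definition of $\Delta^\psi$ behaves well under rescaling of the argument and under linear combinations of the parameter function. The only thing to be mildly careful about is that scaling $f$ by $r$ requires $r > 0$ so that $rf \in C^+(V)$ and the ratios remain in $\posR$, which is exactly the hypothesis $r \in \posR$.
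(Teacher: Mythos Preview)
Your proof is correct and follows essentially the same approach as the paper's: the paper simply states that part (1) is a direct consequence of the definition of $\Delta^\psi$ and the linearity of $\Delta$, and that part (2) follows because the Laplacian vanishes on constant functions. You have merely written out these observations in more detail by expanding the definition into the sum over neighbors, which is exactly what the paper's one-line proof is alluding to.
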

\begin{proof}
The first claim is a direct consequence of the definition of $\Delta^\psi$ and the linearity of $\Delta$. The second claim follows from the fact that the Laplacian vanishes on constant functions.
\end{proof}

With appropriate choices of $\psi$, the $\psi$-Li-Yau inequality (\ref{psiLY}) turns into the logarithmic Li-Yau inequality (\ref{logLY}) respectively into the $\sqrt{\cdot}$ Li-Yau inequality (\ref{BLY}). This is discussed next.

\begin{example} \label{Elog}
Let $G=(V,E)$ be a finite graph.

	(1) 
		If $\psi = \log$, and $f \in C(V)$, and $v \in V$, then we get
		\begin{eqnarray*}
			\Delta^{\log}f(v) &=& \Delta \left[ \log \left(\frac f {f(v)}\right)\right](v) = (\Delta \log f) (v) - \Delta (\log f(v)) \\ &=& (\Delta \log f) (v)
		\end{eqnarray*}
		since $f(v)$ is a constant and the Laplacian vanishes on constants. Hence, we have
		\[
			-\Delta^{\log}f =  -\Delta \log f.
		\]
		This means, the $\psi$-Li-Yau inequality (\ref{psiLY}) turns into the logarithmic Li-Yau inequality (\ref{logLY}).

	(2)	
	  If $\psi = \sqrt{\cdot}$ and $\sol$ is a positive solution to the heat equation on $G$ and $v \in V$, then
		\[
			\Delta^{\psi}u(v) = \Delta  \sqrt{\frac u {u(v)}}  (v) = \frac   { (\Delta \sqrt{u})(v)} {\sqrt{u(v)}}
		\] 
		since the Laplacian is linear. Thus,
		\begin{eqnarray*}
			-\Delta^{\psi}u &=&  -\frac {\Delta  \sqrt u} {\sqrt u} \\
										  &=&  \frac {\Delta \left[{\sqrt u}^2\right] - 2 \sqrt u \Delta \sqrt{u} } {2u} -  \frac {\Delta u} {2 u} \\
										  &=&  											\frac {\Gamma(\sqrt u)} u -  \frac {\Delta u} {2 u} \\
											&\stackrel{\LL(u) =0}{=}& \frac {\Gamma(\sqrt u)} u -  \frac {\partial_t u}{2u} \\
											&=& \frac {\Gamma(\sqrt u)} u - \frac {\partial_t \sqrt {u}}{\sqrt{u}}.
		\end{eqnarray*}
		This means, the $\psi$-Li-Yau inequality (\ref{psiLY}) turns into the $\sqrt{\cdot}$ Li-Yau inequality (\ref{BLY}).

\end{example}

\begin{rem}
In the Example \ref{Elog}.(2), we have seen the chain rule for the square root on graphs which is a key identity in \cite{Bauer2013}. 
This chain rule states that on finite graphs $G=(V,E)$ for all $f \in C^+(V)$
\[
2 \sqrt{f}\Delta \sqrt{f} =  \Delta f - 2 \Gamma (\sqrt{f}).
\]
\end{rem}

There is a useful representation of $\Delta^\psi$.

\begin{lemma} [Representation of $\Delta^\psi$] \label{LRL}
Let $\psi \in C^1(\posR)$.
 
    Let $G=(V,E)$ be a finite graph, let $f \in C^+(V)$ and let $v \in V$. If  $\psi(1)=0$, then
    \begin{equation}
      \Delta^\psi f (v) = \sum_{w \sim v} \psi \left( \frac {f(w)}{f(v)} \right). \label{repG}
    \end{equation}
\end{lemma}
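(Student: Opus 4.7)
The plan is to unwind the definition of $\Delta^\psi$ directly and then apply the hypothesis $\psi(1)=0$ at the single point where it is needed. Fix $v \in V$ and define the auxiliary function $g : V \to \IR$ by $g(w) := \psi(f(w)/f(v))$ (note that $f(w)/f(v) \in \posR$ since $f \in C^+(V)$, so $g$ is well-defined). By Definition \ref{DLP},
\[
(\Delta^\psi f)(v) = (\Delta g)(v).
\]

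Next I would apply the definition of the graph Laplacian to $g$ at $v$:
\[
(\Delta g)(v) = \sum_{w \sim v} \bigl(g(w) - g(v)\bigr) = \sum_{w \sim v} \left[\psi\!\left(\frac{f(w)}{f(v)}\right) - \psi\!\left(\frac{f(v)}{f(v)}\right)\right].
\]
Since $f(v)/f(v) = 1$ and $\psi(1) = 0$ by assumption, the second term in each summand vanishes, leaving
\[
(\Delta^\psi f)(v) = \sum_{w \sim v} \psi\!\left(\frac{f(w)}{f(v)}\right),
\]
which is the claimed formula \eqref{repG}.

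There is essentially no obstacle here; the statement is a direct computation from the definition, and the only role played by the hypothesis $\psi(1)=0$ is to kill the ``diagonal'' contribution $\psi(1)\deg v$ that would otherwise appear. In fact, without that hypothesis one still has the identity $\Delta^\psi f (v) = \sum_{w \sim v}\psi(f(w)/f(v)) - \psi(1)\deg v$, which may be worth noting in passing but is not needed for what follows.
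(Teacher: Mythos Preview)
Your proof is correct and follows exactly the same idea as the paper's own proof, which simply notes that the claim is obvious since $0 = \psi(1) = \psi(f(v)/f(v))$. You have merely spelled out the unwinding of the Laplacian explicitly, which is fine.
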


\begin{proof}
This claim is obvious, since $0 = \psi(1) = \psi \left( \frac {f(v)}{f(v)} \right)$ .
\end{proof}

  \subsection{The $CD\psi$ condition and $\Gamma_2^\psi$} \label{SCD}
  
In this subsection, we introduce the non bilinear operator $\Gamma_2^\psi$ and formulate the $CD\psi$ condition. 
 Furthermore, we will prove our key identity $\LL(u \Delta^\psi u)) = -2 u\Gamma_2^\psi (u)$ whenever $\LL u  = 0$. We will use this key identity to characterize the $CD\psi$ condition.

\begin{defn}[Second $\psi$-gradient $\Gamma_2^\psi$] \label{DefG2} 
Let $\psi \in C^1(\posR)$, and let $G=(V,E)$ be a finite graph. Then, we define
$\Omega^\psi : C^+(V) \to C(V)$ by 
\[
(\Omega^\psi f ) (v) :=  \left( \Delta \left[ \psi' \left( \frac f {f(v)} \right)  \cdot \frac f {f(v)} \left[ \frac{\Delta f} {f} - \frac{(\Delta f)(v)} {f(v)} \right]  \right] \right) (v).
\]

Furthermore, we define the \emph{second} $\psi$-\emph{gradient} $\Gamma_2^\psi : C^+(V) \to C(V)$ by
\[
2 \Gamma_2^\psi (f) := \Omega^\psi f + \frac {\Delta f \Delta^\psi f} f - \frac {\Delta \left(f \Delta^\psi f\right)} f.
\]
\end{defn}

In the next subsection, we will also define the first $\psi$-gradient $\Gamma^\psi$. But there is no obvious derivation of $\Gamma_2^\psi$ from $\Gamma^\psi$ in a similar way as $\Gamma_2$ derives from $\Gamma$ (see Definition \ref{defGamma}). Instead, one has a variant of our key identity, namely
\[
\LL\left(u \Gamma^\psi u \right) =  2 u \Gamma_2^\psi u , \quad \mbox{if } \LL u =0.
\]
Moreover, in the case of manifolds, one has $\Gamma_2^\psi = \Gamma_2 (\log (\cdot))$ for suitable $\psi$.
This result will be discussed in the manifolds subsection (Subsection \ref{SMF})

With the previous definition, we have all ingredients for defining the $CD\psi$ condition.

\begin{defn}[$CD\psi$ condition]
Let $G=(V,E)$ be a finite graph and $d \in \posR$. We say $G$ satisfies the $CD\psi(d,0)$ \emph{inequality} if for all $f \in C^+(V)$, one has 
\begin{equation}
  \Gamma_2^\psi (f) \geq \frac 1 d (\Delta^\psi f)^2. \label{CDpsi}
\end{equation}
\end{defn}


Now, we will give the key identity of the second $\psi$-gradient $\Gamma_2^\psi$. 
It will be used to show the analogy between the classical Bakry-Emery calculus and the $\Gamma^\psi$-calculus, and to characterize the validity of the $CD\psi$ inequality.

\begin{lemma} [Representation of $\Gamma_2^\psi$]
Let $\psi \in C^1(\posR)$, let $I \subset \IR$ be an interval, and let $t_0 \in I$.
 
      Let $G=(V,E)$ be a finite graph, $u : C^1(V \times I)$ a positive function with $\LL(u) (\cdot,t_0)=0$ . Then
      \begin{equation}
        \LL \left( - u \Delta^\psi u \right) (\cdot,t_0) = 2 u \Gamma_2^\psi (u) (\cdot,t_0). \label{repGG}
      \end{equation}
      
\end{lemma}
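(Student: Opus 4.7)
The identity to prove is
\[
\LL(-u\Delta^\psi u)(\cdot,t_0) \;=\; 2u\,\Gamma_2^\psi(u)(\cdot,t_0),
\]
and the plan is to unfold both sides and show they reduce to the same pointwise formula. Since $\LL = \Delta - \partial_t$, the left side equals
\[
-\Delta(u\Delta^\psi u) + \partial_t(u\Delta^\psi u),
\]
while by the definition of $\Gamma_2^\psi$ the right side equals
\[
u\,\Omega^\psi u + \Delta u\cdot\Delta^\psi u - \Delta(u\Delta^\psi u).
\]
The Laplacian terms $-\Delta(u\Delta^\psi u)$ match on both sides and cancel, so the identity is equivalent to
\[
\partial_t(u\Delta^\psi u) \;=\; u\,\Omega^\psi u + \Delta u\cdot\Delta^\psi u \qquad\text{at }t_0.
\]
Applying the ordinary Leibniz rule in $t$ and using $\partial_t u = \Delta u$ at $t_0$ (which is where the hypothesis $\LL u(\cdot,t_0)=0$ enters), the equation further reduces to
\[
u\,\partial_t(\Delta^\psi u) \;=\; u\,\Omega^\psi u \qquad\text{at }t_0,
\]
and since $u>0$, it suffices to prove $\partial_t(\Delta^\psi u) = \Omega^\psi u$ at $t_0$.

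To verify this last identity, I would evaluate both sides at an arbitrary vertex $v$ using the representation from Lemma \ref{LRL} applied to $\psi-\psi(1)$ (which has the same $\Delta^\psi$ as $\psi$ by Lemma \ref{Lcalc}(1) applied to a constant shift, after noting $\Delta^{\psi(1)}=0$). This gives
\[
\Delta^\psi u(v) \;=\; \sum_{w\sim v}\Bigl[\psi\!\left(\tfrac{u(w)}{u(v)}\right)-\psi(1)\Bigr].
\]
Differentiating in $t$ (the sum is finite since the graph is finite) and applying the quotient rule yields
\[
\partial_t\Delta^\psi u(v) \;=\; \sum_{w\sim v}\psi'\!\left(\tfrac{u(w)}{u(v)}\right)\tfrac{u(w)}{u(v)}\Bigl[\tfrac{\partial_t u(w)}{u(w)}-\tfrac{\partial_t u(v)}{u(v)}\Bigr],
\]
and inserting $\partial_t u = \Delta u$ at $t_0$ turns the bracket into $\frac{\Delta u(w)}{u(w)} - \frac{\Delta u(v)}{u(v)}$. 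On the other hand, $\Omega^\psi u(v)$ is the Laplacian at $v$ of the function
\[
x \;\longmapsto\; \psi'\!\left(\tfrac{u(x)}{u(v)}\right)\tfrac{u(x)}{u(v)}\Bigl[\tfrac{\Delta u(x)}{u(x)}-\tfrac{\Delta u(v)}{u(v)}\Bigr],
\]
which vanishes at $x=v$ (the bracket is zero there), so the Laplacian collapses to exactly the same sum over $w\sim v$. The two expressions coincide, finishing the proof.

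The only potentially delicate step is the algebraic identification in the last paragraph: one must be careful that $v$ is fixed throughout (so $u(v)$, $\Delta u(v)$ are scalars, not functions of $w$) and that the differentiation in $t$ commutes with the finite sum over neighbors — both routine given $\sol$ and the finiteness of $V$. Everything else is just bookkeeping: the product rule for $\partial_t$, cancellation of $\Delta(u\Delta^\psi u)$, and the hypothesis $\partial_t u = \Delta u$ at $t_0$. Thus $\Omega^\psi$ has been engineered precisely to represent $\partial_t\Delta^\psi u$ along the heat flow, and the whole identity is a single product-rule computation packaged around this fact.
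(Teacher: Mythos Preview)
Your proof is correct and follows essentially the same route as the paper: expand $\LL(-u\Delta^\psi u)$ via the product rule, use $\partial_t u=\Delta u$ at $t_0$, and verify $\partial_t(\Delta^\psi u)=\Omega^\psi u$ by differentiating $\psi(u/u(v))$ with the chain and quotient rules. The only cosmetic difference is that the paper keeps the $\Delta[\cdot](v)$ notation throughout rather than unpacking it into a sum over neighbors via Lemma~\ref{LRL}.
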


\begin{proof}
All computations of the proof of the first claim are taking place at the time $t=t_0$.
\begin{eqnarray*}
\LL \left(-u \Delta^\psi u\right) &=& - \Delta \left(u \Delta^\psi u \right) + \partial_t \left(u \Delta^\psi u \right) \\
&=& - \Delta \left(u \Delta^\psi u \right) + \left( \Delta u \right)  \Delta^\psi u  + u  \partial_t  \Delta^\psi u .\\
\end{eqnarray*}
For all $v \in V$ we compute 
\begin{eqnarray*}
\partial_t  \Delta^\psi u (v) &=& \Delta \partial_t \left[ \psi\left(\frac u {u(v)}\right)\right](v) \\
&=& \Delta \left[ \psi'\left(\frac u {u(v)}\right) \cdot \partial_t \left(\frac u {u(v)}\right)   \right](v) \\
&=& \Delta \left[ \psi'\left(\frac u {u(v)}\right) \cdot \frac{u(v)\partial_t u - u \partial_t u(v)}{u(v)^2}   \right](v) \\
&=& \Delta \left[ \psi'\left(\frac u {u(v)}\right) \cdot \frac u {u(v)} \cdot \left[\frac{\Delta u} {u} - \frac{\Delta u(v)} {u(v)}  \right]   \right](v) \\
&=& \Omega^\psi u (v).
\end{eqnarray*}

Putting the two equalities above together results for all $v \in V$ in
\[
\LL \left(-u \Delta^\psi u\right) = - \Delta \left(u \Delta^\psi u \right) + \left(\Delta u \right) \Delta^\psi u  + u \Omega^\psi u = 2u\Gamma_2^\psi (u).
\]
This finishes the proof.
\end{proof}

Next, we present a characterization of the $CD\psi$ condition which is the key to prove the $\psi$-Li-Yau inequality (\ref{psiLY}).

\begin{theorem} [Characterization $CD\psi$] \label{tchar}
Let $G=(V,E)$ be a finite graph, $\psi \in C^1(\posR)$, and $d \in \posR$. Then, the following statements are equivalent.
\eChar
	\item   \label{1}
	  $G$ satisfies the $CD\psi(d,0)$ inequality. 
	\item   \label{2}
	  For all  positive solutions $\sol$ to the heat equation on $G$, one has
	  \begin{equation}
	    \LL \left( -u \Delta^\psi u \right) \geq \frac 2 d u \left( \Delta^\psi u \right)^2. \label{CCD}
	  \end{equation}
\end{enumerate}
\end{theorem}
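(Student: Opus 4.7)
The proof is an equivalence, so I would handle the two implications separately, and in both directions the representation lemma (\ref{repGG}) is the workhorse.

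For the implication (\ref{1}) $\Rightarrow$ (\ref{2}), the argument is essentially a one-liner. Let $u \in C^1(V \times \II)$ be a positive solution to the heat equation, and fix any $t_0 \in \II$. Since $u(\cdot, t_0) \in C^+(V)$, the $CD\psi(d,0)$ assumption applied to $u(\cdot,t_0)$ yields $\Gamma_2^\psi(u)(\cdot, t_0) \geq \frac{1}{d}(\Delta^\psi u)^2(\cdot, t_0)$. Multiplying by $2u(\cdot,t_0) > 0$ and invoking the key identity (\ref{repGG}) at $t_0$ gives $\LL(-u\Delta^\psi u)(\cdot, t_0) = 2u \Gamma_2^\psi(u)(\cdot, t_0) \geq \frac{2}{d} u (\Delta^\psi u)^2(\cdot, t_0)$. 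As $t_0$ was arbitrary, (\ref{CCD}) follows.

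For the reverse implication (\ref{2}) $\Rightarrow$ (\ref{1}), the plan is to realise an arbitrary positive function $f \in C^+(V)$ as the time-$0$ slice of a positive solution to the heat equation, so that (\ref{CCD}) can be pulled back into a purely spatial inequality using (\ref{repGG}). Concretely, since $G$ is finite, the heat semigroup $P_t := e^{t\Delta}$ is well defined; because $\Delta$ has non-negative off-diagonal entries (it is a Metzler matrix) and vanishes on constants, $P_t$ maps $C^+(V)$ into $C^+(V)$ for every $t \geq 0$. Setting $u(v,t) := (P_t f)(v)$ produces $u \in C^1(V \times \nnegR)$ with $\LL(u) \equiv 0$, $u > 0$ on $V \times \nnegR$, and $u(\cdot, 0) = f$. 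Applying (\ref{CCD}) at $t = 0$ and using the key identity, while observing that $\Delta^\psi$ and $\Gamma_2^\psi$ are purely spatial operators so that $\Delta^\psi u(\cdot,0) = \Delta^\psi f$ and $\Gamma_2^\psi(u)(\cdot,0) = \Gamma_2^\psi(f)$, yields
\[
2 f \Gamma_2^\psi(f) = \LL(-u\Delta^\psi u)(\cdot, 0) \geq \frac{2}{d} f (\Delta^\psi f)^2.
\]
Dividing by $2f > 0$ gives (\ref{CDpsi}).

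The only step that needs any care is the construction of the positive solution realising arbitrary positive initial data, but on a finite graph this is a standard property of $e^{t\Delta}$, so I do not anticipate a genuine obstacle. The conceptual content of the theorem is entirely carried by the key identity (\ref{repGG}); the theorem is essentially the statement that this identity lets one translate between the pointwise curvature-dimension inequality on $C^+(V)$ and a differential inequality along heat flow.
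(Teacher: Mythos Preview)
Your proposal is correct and follows essentially the same approach as the paper: both directions hinge on the key identity (\ref{repGG}), and for (\ref{2}) $\Rightarrow$ (\ref{1}) the paper likewise realises an arbitrary $f \in C^+(V)$ as the initial data of the semigroup solution $u_t = e^{t\Delta}f$ (deferring the positivity statement to Proposition~\ref{SG}). The only cosmetic difference is that the paper passes to the limit $t \searrow 0$ via continuity of $\Delta^\psi$ and $\Gamma_2^\psi$ rather than evaluating directly at $t=0$, but your direct evaluation is valid since the representation lemma is stated for any $t_0$ in the interval and the semigroup solution is $C^1$ up to $t=0$.
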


This characterization shows the connection between the time-independent $CD\psi$ condition and its consequences for solutions to the heat equation.

\begin{proof}

The implication $\emph{(\ref{1})} \Rightarrow \emph{(\ref{2})}$ is a direct consequence of the identity (\ref{repGG}) from the representation lemma of $\Gamma_2^\psi$, i.e.
\[
  \LL \left( -u \Delta^\psi u \right) \stackrel{(\ref{repGG})}{=} 2 u \Gamma_2^\psi (u) \stackrel{\emph{(\ref{1})}}{\geq} \frac 2 d u \left( \Delta^\psi u \right)^2.
\]

Next, we show $\emph{(\ref{2})} \Rightarrow \emph{(\ref{1})}$.

Given $f \in C^+(V)$, there is a positive solution $\sol$ to the heat equation on $G$, such that
$u(\cdot,0) = f$.
Such a solution $u$ can be obtained by
\[
u_t := e^{\Delta t} f := \sum_{k=0}^{\infty} \frac {t^k \Delta^k f}{k!}
\]
for $x \in V$ and  $t \geq 0$.  This statement will be proved in Proposition \ref{SG}. We use the continuity of $u$ and $\Delta^\psi$ and $\Gamma_2^\psi$,  and we apply (\ref{repGG}) to get
\begin{eqnarray*}
2f \Gamma_2^\psi (f) 		&=& 							\lim_{t \downto 0} 2u \Gamma_2^\psi (u)(\cdot,t) 
\stackrel{(\ref{repGG})}{=} 						\lim_{t \downto 0}  \LL \left( -u \Delta^\psi u \right) (\cdot,t)   \\
&\stackrel{\emph{(\ref{2})}}{\geq}& 						\lim_{t \downto 0} \frac 2 d u \left( \Delta^\psi u \right)^2 (\cdot,t)
 = 																			\frac 2 d f \left( \Delta^\psi f \right)^2.
\end{eqnarray*}
This finishes the proof.
\end{proof}

\subsection{The gradient form $\Gamma^\psi$} \label{SGF}

In this subsection, we introduce the gradient form $\Gamma^\psi$. This is not necessary to understand the $CD \psi$ condition and the $\psi$-Li-Yau inequality.
But it enables us to formulate the $\psi$-Li-Yau inequality as a gradient estimate. This formulation is crucial to derive Harnack inequalities.

\begin{defn}
For $\psi \in C^1(\posR)$, we define
\[
\overline{\psi}(x):= \psi'(1)\cdot(x-1)  - (\psi(x) - \psi(1)). 
\]
\end{defn}

If $\psi$ is a concave function, then obviously, one has $\overline{\psi}(x) \geq 0$ for all $x>0$.
We can see $\overline{\psi}(1)=0$.
Hence, if $\psi$ is concave and if $G=(V,E)$ is a finite graph, then for every $v_0 \in V$ and every positive $f \in C^+(V)$, the function $V \to \IR$ with
\[
v \mapsto \overline{\psi} \left( \frac {f(v)}{f(v_0)} \right)
\]
has a minimum in $v=v_0$.
Consequently
\[
\Delta^{\overline{\psi}} f(v_0) = \Delta \overline{\psi} \left( \frac {f}{f(v_0)} \right) \geq 0.
\]

This motivates the following definition.

\begin{defn}[$\psi$-gradient $\Gamma^\psi$] 
 Let $\psi \in C^1(\posR)$ be a concave function and let $G=(V,E)$ be a finite graph. 
We define the $\psi$-\emph{gradient} as $\Gamma^\psi : C^+(V) \to C(V)$,
\[
\Gamma^\psi  := \Delta^{\overline{\psi}}.
\]
\end{defn}

\begin{rem}
By using this, we can also introduce a $CD\psi$ inequality for a dimension $d>0$ with a non-zero curvature bound $K \in \IR$ denoted by $CD\psi(d,K)$ via the inequality
\[
\Gamma_2^\psi( f) \geq \frac 1 d \left(\Delta^\psi f \right)^2 + K\Gamma^\psi( f)  \quad \mbox{for all } f \in C^+(V).
\]
In this paper, we focus on the case $K=0$.
\end{rem}

In the following lemma, we give a representation of $\Delta^\psi$ which allows us to understand the $\psi$-Li-Yau inequality as a gradient estimate.
This lemma will be used in Section \ref{CHarnack} to derive Harnack inequalities.
\begin{lemma}[Gradient representation of $\Delta^\psi$] \label{LGR} 
Let $G=(V,E)$ be a finite graph and let $\psi \in C^1(\IR)$ be a concave function. Then for all $f \in C^+(V)$, one has
\begin{equation}
-\Delta^\psi f = \Gamma^\psi (f) - \psi'(1) \frac {\Delta f} f. \label{EGamma}
\end{equation}
\end{lemma}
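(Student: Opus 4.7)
The plan is to expand both sides of the claimed identity into sums over neighbours using the explicit formula for $\Delta^\psi$ (and its variant $\Gamma^\psi = \Delta^{\overline\psi}$), and observe that the asserted identity is just the algebraic decomposition $\overline\psi(x) = \psi'(1)(x-1) - (\psi(x) - \psi(1))$ summed over the neighbours of $v$.

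In more detail, I would first unfold $\Delta^\psi f(v)$ from Definition \ref{DLP}. By linearity of $\Delta$ and the fact that $\Delta$ kills constants, one gets directly
\[
\Delta^\psi f(v) = \Delta\!\left[\psi\!\left(\tfrac{f}{f(v)}\right)\right]\!(v) = \sum_{w\sim v}\left[\psi\!\left(\tfrac{f(w)}{f(v)}\right) - \psi(1)\right].
\]
Next, since $\overline\psi(1)=0$, Lemma \ref{LRL} applies to $\overline\psi$ and gives
\[
\Gamma^\psi f(v) = \Delta^{\overline\psi} f(v) = \sum_{w\sim v} \overline\psi\!\left(\tfrac{f(w)}{f(v)}\right).
\]
Plugging in the definition of $\overline\psi$ splits this sum into the linear part $\psi'(1)\sum_{w\sim v}\bigl(\tfrac{f(w)}{f(v)}-1\bigr)$ and the $\psi$-part $-\sum_{w\sim v}\bigl[\psi(f(w)/f(v)) - \psi(1)\bigr]$.

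The linear part simplifies to $\psi'(1)\cdot\tfrac{\Delta f(v)}{f(v)}$ by definition of the ordinary Laplacian, and the $\psi$-part is exactly $-\Delta^\psi f(v)$ by the first identity above. Rearranging yields
\[
-\Delta^\psi f(v) = \Gamma^\psi f(v) - \psi'(1)\,\frac{\Delta f(v)}{f(v)},
\]
which is the claimed identity pointwise in $v$. No analytic obstacle arises; the only thing to be careful about is the appearance of the $\psi(1)$ correction and the role of $\overline\psi(1)=0$ in legitimising the sum-representation of $\Gamma^\psi$. Concavity of $\psi$ is not actually used in this computation (it is only needed to make $\Gamma^\psi$ non-negative and thus to interpret the identity as a bona fide gradient estimate).
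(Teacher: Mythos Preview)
Your proof is correct and essentially the same as the paper's: the paper simply says the identity follows from Lemma~\ref{Lcalc} (linearity of $\Delta^{\psi}$ in the parameter $\psi$) applied to the decomposition $\overline\psi = \psi'(1)(\mathrm{id}-1) - (\psi-\psi(1))$, which is exactly the computation you carry out after unfolding into sums over neighbours via Lemma~\ref{LRL}. Your observation that concavity plays no role in the identity itself is also correct.
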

\begin{proof}
The claim follows immediately from Lemma \ref{Lcalc} and from the definition of $\Delta^\psi$ (Definition \ref{DLP}).
\end{proof}

\subsection{The $\Gamma^\psi$-calculus on manifolds} \label{SMF} 

In this subsection, we will see that the new $\Gamma^\psi$ calculus can be transferred to the setting of manifolds.
Our goal is to show that in this setting, the $\Gamma^\psi$ calculus coincides with the common $\Gamma$-calculus by Bakry and Émery (cf. \cite{Bakry1985,Bakry1992,Bakry2006}).
For basics on Riemannian manifolds, we refer the reader to \cite{Spivak1965}.

\begin{defn}[$\psi$-Laplacian on manifolds] 
Let $\psi \in C^1(\posR)$ and let $\LB$ be the Laplace-Beltrami operator on a complete, connected Riemannian manifold $M$. Then, we call
$\Delta^\psi : C_+^\infty(M) \to C^\infty(M)$ with 
\[
(\Delta^\psi f ) (v) :=  \left( \Delta \left[ \psi \left( \frac f {f(v)} \right) \right] \right) (v)
\]
the $\psi$\emph{-Laplacian}. 
\end{defn}

Analogously, we can transfer the definitions from the previous three subsections to the setting of manifolds.
By doing this transfer, the equations (\ref{repGG}) and (\ref{EGamma}) remain valid also on manifolds. The proofs are analogous to the ones in the graph case.

But in contrast to the graph case, the $\Gamma^\psi$-calculus on manifolds only depends on $\psi'(1)$ and $\psi''(1)$. We will discuss this in the following representation theorem.

\begin{theorem} [Representation of the $\psi$-operators] \label{TPO}
Let $\psi \in C^1(\posR)$. Let $\LB$ be the Laplace-Beltrami operator on a complete, connected Riemannian manifold $M$ and $\psi \in C^\infty(\posR)$. Then for all positive $f \in C^\infty(M)$, one has
     \begin{eqnarray}
       \LB^\psi f    &=& \psi'(1)  \frac{\LB f}{f} + \psi''(1) \frac{\Gamma  (f)}{f^2}, \label{TMP}\\ 
       \Gamma^\psi   &=& -\psi''(1)  \Gamma (\log (\cdot)), \label{clGa} \\
       \Gamma_2^\psi &=& -\psi''(1)  \Gamma_2 (\log (\cdot)). \label{TM2}    
    \end{eqnarray}    
\end{theorem}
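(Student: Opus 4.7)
The plan is to establish the three representations in sequence, each reducing to the chain rules (\ref{CR})--(\ref{CRP}) for the Laplace-Beltrami operator together with the previously proved facts.

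First, for the formula (\ref{TMP}) for $\Delta^\psi$, I would fix a positive $f$ and a point $v$, introduce $\Phi_v(x) := \psi(x/f(v))$, and apply the chain rule (\ref{CR}) to $\Delta(\Phi_v \circ f)(v) = \Delta^\psi f(v)$. Since $\Phi_v'(f(v)) = \psi'(1)/f(v)$ and $\Phi_v''(f(v)) = \psi''(1)/f(v)^2$, the identity (\ref{TMP}) follows directly. For (\ref{clGa}) I would then apply (\ref{TMP}) with $\overline\psi$ in place of $\psi$: since $\overline\psi'(1) = 0$ and $\overline\psi''(1) = -\psi''(1)$, this gives $\Gamma^\psi f = \Delta^{\overline\psi} f = -\psi''(1)\Gamma(f)/f^2$, and the chain rule (\ref{CRG}) applied to $\Phi = \log$ yields $\Gamma(\log f) = \Gamma(f)/f^2$, giving (\ref{clGa}).

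For (\ref{TM2}), I would extend a given positive smooth $f$ to a smooth positive solution $u$ of $\LL u = 0$ by local parabolic theory and use the key identity (\ref{repGG}), whose proof transfers verbatim to the manifold setting, to write $2 f\, \Gamma_2^\psi(f) = \LL(-u\Delta^\psi u)\big|_{t=0}$. Multiplying (\ref{TMP}) by $f$ yields the decomposition
\[
f\Delta^\psi f = \psi'(1)\, f\Delta\log f + [\psi'(1)+\psi''(1)]\, f\Gamma(\log f),
\]
so computing $\LL(-u\Delta^\psi u)$ reduces to computing $\LL(u\Delta\log u)$ and $\LL(u\Gamma(\log u))$ along the heat flow. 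The first equals $-2u\Gamma_2(\log u)$ by the classical Bochner/Bakry-\'Emery identity for $\LL u = 0$.

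The technical heart of the argument is to establish the analogous identity $\LL(u\Gamma(\log u)) = 2u\Gamma_2(\log u)$ for $\LL u = 0$. I would prove this by expanding $\LL(u\Gamma(\log u)) = u\LL\Gamma(\log u) + 2\Gamma(u, \Gamma(\log u))$ via the product formula, using the definition of $\Gamma_2$ together with bilinearity of $\Gamma$ to obtain $\LL\Gamma(\log u) = 2\Gamma_2(\log u) + 2\Gamma(\log u, \LL\log u)$, noting that $\LL \log u = -\Gamma(\log u)$ by (\ref{CR}) and $\LL u = 0$, and finally invoking $\Gamma(u, h) = u\,\Gamma(\log u, h)$ (from (\ref{CRG})) so that the two $\Gamma(\log u, \Gamma(\log u))$ contributions cancel. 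Assembling,
\[
\LL(-u\Delta^\psi u) = 2\psi'(1)\,u\Gamma_2(\log u) - 2[\psi'(1) + \psi''(1)]\,u\Gamma_2(\log u) = -2\psi''(1)\, u\Gamma_2(\log u),
\]
so evaluation at $t = 0$ yields (\ref{TM2}). The delicate cancellation step $\LL(u\Gamma(\log u)) = 2u\Gamma_2(\log u)$ is the main obstacle, since it requires combining the chain rules for both $\Delta$ and $\Gamma$, the bilinearity of $\Gamma$, and the definition of $\Gamma_2$ in precisely the right way; everything else is routine bookkeeping.
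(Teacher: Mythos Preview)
Your argument is correct. Parts (\ref{TMP}) and (\ref{clGa}) match the paper's proof essentially verbatim. For (\ref{TM2}) you take a somewhat different route: the paper proceeds in two steps, first establishing $\Gamma_2^{\log}(f)=\Gamma_2(\log f)$ by a direct computation of $\Omega^{\log}$ together with an expansion of $2\Gamma_2(\log f)$ via the chain rules, and then reducing general $\psi$ to the case $\psi=\log$ via the identity $\LL(u\Gamma^\psi u)=2u\Gamma_2^\psi(u)$ combined with (\ref{clGa}). You instead decompose $u\Delta^\psi u$ directly as a linear combination of $u\Delta\log u$ and $u\Gamma(\log u)$ and compute $\LL$ on each piece, treating general $\psi$ in a single pass. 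Both arguments ultimately rest on the same cancellation identity $\LL(u\Gamma(\log u))=2u\Gamma_2(\log u)$ that you isolate; your organization is arguably a bit more streamlined since it avoids the separate $\Omega^{\log}$ computation, while the paper's two-step structure makes the special role of $\psi=\log$ more transparent.

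One small point: you invoke local parabolic theory to extend $f$ to a heat solution, which on a general complete manifold requires some care regarding positivity and existence. The paper sidesteps this entirely by noting that the key identity (\ref{repGG}) only needs $\LL u(\cdot,0)=0$ at the single time $t=0$, and uses the elementary extension $u(\cdot,t)=f\exp\!\big(t\,\Delta f/f\big)$, which manifestly satisfies this. You may want to adopt that trick; it removes any analytic subtlety.
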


\begin{proof}
Let $f \in C^\infty(M)$ and $x \in M$.

First, we prove identity (\ref{TMP}).

We use the chain rule (\ref{CR}) for
\[
\Phi(s) = \psi \left( \frac s {f(x)} \right),  \qquad s>0 
\]
to get
\begin{eqnarray*}
\LB^\psi f (x) &=& (\LB \Phi(f))(x) \stackrel{(\ref{CR})}{=} \Phi'(f(x)) (\LB f) (x) + \Phi''(f(x)) (\Gamma (f)) (x) \\
&=&  \frac {\psi'(1)}{f(x)} (\LB f) (x) + \frac{\psi''(1)}{(f(x))^2} (\Gamma (f)) (x) \\
&=&  \left[ \psi'(1) \frac {\LB f}{f} + \psi''(1) \frac{\Gamma (f)}{f^2} \right] (x).
\end{eqnarray*}

Since $f$ and $x$ are arbitrary, the claim follows immediately.

Next, we show identity (\ref{clGa}).

Since equation (\ref{EGamma}) remains valid on manifolds, we can calculate 
\[
  \Gamma^\psi (f) \stackrel{(\ref{EGamma})}{=} \psi'(1) \frac {\LB f} f -\LB^\psi f =  - \psi''(1) \frac{\Gamma  (f)}{f^2} \stackrel{(\ref{CRG})}{=} -\psi''(1)  \Gamma (\log  f)
\]
by using the already proven first statement of the present theorem and the chain rule (\ref{CRG}).


Finally, we prove identity (\ref{TM2}) in two steps.
In the first step, we show $\Gamma_2^{\log} (f) = \Gamma_2 (\log f)$, and in the second step, we show $\Gamma_2^\psi = -\psi''(1)\Gamma_2^{\log}$.

We start with $\Gamma_2^{\log} (f) = \Gamma_2 (\log f)$.

For all $x \in M$, we obtain

\begin{eqnarray*}
(\Omega^{\log} f ) (x) &=&    \left( \LB \left[ {\left( \frac f {f(x)} \right)}^{-1}  \cdot \frac f {f(x)} \left[ \frac{\LB f} {f} - \frac{(\LB f)(x)} {f(x)} \right]  \right] \right) (x) \\
															&=&     \left( \LB \left[  \frac{\LB f} {f} - \frac{(\LB f)(x)} {f(x)}   \right] \right) (x) \\
															&=&     \left( \LB \left[  \frac{\LB f} {f}   \right] \right) (x). \\
\end{eqnarray*}
This implies
\[
\Omega^{\log} f = \LB \left(\frac{\LB f}{f} \right).
\]

Now, we use the chain rules
\begin{eqnarray}
\Gamma(\log f,g) &=& \frac {\Gamma(f,g)} f, 											\label{crl1} \\ 
\LB \log f    &=& \frac {\LB f} f - \frac {\Gamma(f)} {f^2}   \label{crl2}
\end{eqnarray}
 to obtain
\begin{eqnarray*}
			2\Gamma_2(\log f) 
&=&															\LB \Gamma (\log f)			- 2 \Gamma (\log f, \LB \log f) \\
&\stackrel{(\ref{crl1})}{=}&			\LB 	\left( \frac{\Gamma(f)}{f^2} \right) - \frac 2 f \Gamma(f,\LB \log f) \\
&=&							  							\LB \left( \frac{\Gamma(f)}{f^2} \right) - \frac 1 f [\LB(f \LB \log f) - f \LB \LB \log f - \LB f \LB \log f] \\
&=&													    \LB \left[ \LB \log f + \frac{\Gamma(f)}{f^2} \right]- \frac{\LB(f \LB \log f)} f + \frac {\LB f \LB \log f} f \\
&\stackrel{(\ref{crl2})}{=}&		  \LB \left( \frac{\LB f} {f} \right)- \frac{\LB(f \LB \log f)} f + \frac {\LB f \LB \log f} f \\
&=&								 						  \Omega^{\log} f - \frac{\LB(f \LB \log f)} f + \frac {\LB f \LB \log f} f \\
&=&								 						  2 \Gamma_2^{\log} (f).
\end{eqnarray*}

Before we prove the second step, we establish the equation
\begin{equation}
\LL\left(u \Gamma^\psi u  \right)=\LL\left( -u \LB^\psi u \right) = 2u\Gamma_2^\psi u \label{ErepG}
\end{equation}
for all positive solutions $u$ to the heat equation on $M \times \nnegR$.

The proof of the second identity is analogous to the proof of the key identity (\ref{repGG}) on graphs.
For showing the first identity, we write

\begin{eqnarray*}
\LL \left( -u \LB^\psi u \right) &=& \LL\left[ u \left( \Gamma^\psi (u) - \psi'(1) \frac {\Delta u} u \right) \right] \\
															   &=& \LL\left(u \Gamma^\psi u  \right)- \psi'(1) \LL(\Delta u)	  \\
															   &=& \LL\left(u \Gamma^\psi u  \right). 
\end{eqnarray*}

We prove $\Gamma_2^\psi (f) = \Gamma_2^{\log} (f)$.

We will use the already proven second claim (\ref{clGa}) of the present theorem, and we will use the identity (\ref{ErepG}) from above.
Therefore, we extend the positive function $f \in C^\infty(M)$ to a positive function $u \in C^\infty(M \times \IR)$, such that $u(\cdot,0) = f$ and $\LL(u) (\cdot,0) = 0$.
We can do this by 
\[
u(\cdot,t) := f \cdot \exp \left( \frac{\LB f}{f} t \right).
\]
At the time $t=0$, we calculate 
\begin{eqnarray*}
2 f \Gamma_2^\psi (f) &= &																2 u \Gamma_2^\psi (u) 
										\stackrel{(\ref{ErepG})}  {=}  		           \LL \left( u \Gamma^\psi  u \right) 
										\stackrel{(\ref{clGa})}   {=} 		-\psi''(1) \cdot \LL \left( u \Gamma^{\log} u \right) \\
										&\stackrel{(\ref{ErepG})}  {=}&			-\psi''(1) \cdot 2 u \Gamma_2^{\log} (u) 
										=  -\psi''(1)\cdot 2 f \Gamma_2^{\log} (f).
\end{eqnarray*}
This finishes the proof, since $f$ is an arbitrary positive function.
\end{proof}

\begin{rem}
In the proof of \cite[Lemma 3.12]{Bauer2013}, similar computations can be found to show the connection between their $CDE$ condition and the $CD$ condition.
\end{rem}

With appropriate $\psi$, the representation of the $\psi$-operators simplifies to the following corollary.

\begin{corollary}[Coincidence of the $\psi$-operators with the $\Gamma$ calculus]
Let $\psi \in C^\infty$ with $\psi'(1) = 1 = - \psi''(1)$, and let $\LB$ be the Laplace-Beltrami operator on a complete, connected Riemannian manifold $M$. Then for all positive $f \in C^\infty(M)$, one has
    \begin{eqnarray}
       \LB^\psi f 				\;=&		\LB^{\log} f 				&=\;   \LB \log f, \label{Apsi} \\
       \Gamma^\psi (f) 		\;=& 	\Gamma^{\log} (f) 	  &=\; 	\Gamma (\log f), \\
       \Gamma_2^\psi (f) 	\;=&		\Gamma_2^{\log} (f) &=\; 	\Gamma_2 (\log f).
    \end{eqnarray}      
\end{corollary}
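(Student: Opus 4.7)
The plan is to reduce everything to Theorem \ref{TPO} by noting that the hypothesis $\psi'(1) = 1 = -\psi''(1)$ is satisfied by $\log$ itself, since $(\log)'(1) = 1$ and $(\log)''(1) = -1$. The representations (\ref{TMP}), (\ref{clGa}), (\ref{TM2}) depend on $\psi$ only through the two values $\psi'(1)$ and $\psi''(1)$, so all three $\psi$-operators must coincide with the corresponding $\log$-operators. That disposes of the left-hand equalities in each line of the corollary essentially for free.

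The remaining task is to identify the $\log$-operators with the classical objects $\Delta \log f$, $\Gamma(\log f)$, and $\Gamma_2(\log f)$. For the first, I would apply the manifold chain rule (\ref{CR}) with $\Phi = \log$ to get
\[
\Delta \log f = \frac{\Delta f}{f} - \frac{\Gamma(f)}{f^2},
\]
and then observe that the right-hand side is exactly what (\ref{TMP}) yields for $\psi = \log$ (since $\psi'(1) = 1$, $\psi''(1) = -1$); this gives $\Delta^{\log} f = \Delta \log f$. For the second identity, $\Gamma^{\log}(f) = \Gamma(\log f)$ is precisely the content of (\ref{clGa}) specialized to $\psi = \log$, where the factor $-\psi''(1) = 1$. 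Likewise $\Gamma_2^{\log}(f) = \Gamma_2(\log f)$ is (\ref{TM2}) specialized to $\psi = \log$.

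I do not foresee any real obstacle: this corollary is in essence a bookkeeping statement which isolates the case where the two Taylor coefficients of $\psi$ at $1$ agree with those of $\log$. All of the analytic work — the use of the chain rule, the manipulation of $\Omega^{\log}$, and the passage through the key identity (\ref{ErepG}) — was already done inside the proof of Theorem \ref{TPO}. Thus the proof should consist of one short paragraph invoking Theorem \ref{TPO} twice (once for $\psi$, once for $\log$) followed by a single application of (\ref{CR}) to identify $\Delta \log f$ with the stated expression.
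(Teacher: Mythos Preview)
Your proposal is correct and follows essentially the same approach as the paper: the paper's proof simply observes that $\log$ satisfies $\psi'(1)=1=-\psi''(1)$ and then cites Theorem \ref{TPO} as making the claim immediate. Your version is a slightly more explicit unpacking of that citation (in particular, the extra chain-rule step for the first line is just (\ref{crl2}), already established inside the proof of Theorem \ref{TPO}).
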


This corollary shows that our replacement $\Delta \log (\cdot) \leadsto  \Delta^\psi$ is consistent.
Moreover, we can see the equivalence of the $CD$ condition and the $CD \psi$ condition on manifolds.

\begin{proof}
If we choose $\psi=\log$, then we obtain $\psi'(1) = 1 = - \psi''(1)$.
Thus, the claim is an easy consequence of the representation of the $\psi$-operators (Theorem \ref{TPO}).
\end{proof}


  \subsection{A limit theorem on graphs} \label{SLT}  

In this subsection, we will interpret the classical operators $\Delta, \Gamma, \Gamma_2$ as directional derivatives of the corresponding $\psi$-operators.
By this, we can show that the $CD\psi$ condition implies the $CD$ condition on graphs.

\begin{theorem} [Limit of the $\psi$-operators]
Let $G=(V,E)$ be a finite graph. Then for all $f \in C(V)$, one has the pointwise limits 
\begin{eqnarray}
\lim_{\varepsilon \to 0} \frac 1 {\varepsilon} \Delta^\psi (1+ \varepsilon f)     \; =& \psi'(1) \Delta f 			   &\quad \mbox{for } \psi \in C^1(\posR), \label{ELL}\\
\lim_{\varepsilon \to 0} \frac 1 {\varepsilon^2} \Gamma^\psi (1+ \varepsilon f)   \; =& -\psi''(1) \Gamma(f) 	   	 &\quad \mbox{for } \psi \in C^2(\posR), \label{EL1}\\
\lim_{\varepsilon \to 0} \frac 1 {\varepsilon^2} \Gamma_2^\psi (1+ \varepsilon f) \; =& -\psi''(1) \Gamma_2(f) 		 &\quad \mbox{for } \psi \in C^2(\posR). \label{EL2}
\end{eqnarray}
Since all $f \in C(V)$ are bounded, one obviously has $1+ \varepsilon f> 0$ for small enough $\varepsilon>0$.
\end{theorem}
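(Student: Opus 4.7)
Write $f := 1 + \varepsilon g$ and, for $w \sim v$, set $h := g(w) - g(v)$. A direct expansion of the quotient yields
\[
\frac{f(w)}{f(v)} = 1 + \varepsilon h - \varepsilon^2 g(v) h + O(\varepsilon^3),
\]
with remainders uniform in $v,w$ because $V$ is finite. Applying Taylor's theorem to $\psi$ at $1$ and using the defining identity $\Delta^\psi f(v) = \sum_{w \sim v}[\psi(f(w)/f(v)) - \psi(1)]$, I group terms to obtain the \emph{master expansion}
\[
\Delta^\psi(1+\varepsilon g) = \varepsilon \psi'(1) \Delta g + \varepsilon^2 \bigl[\psi''(1) \Gamma(g) - \psi'(1)\, g \Delta g\bigr] + O(\varepsilon^3),
\]
where $\Gamma(g)(v) = \tfrac{1}{2}\sum_{w \sim v}(g(w)-g(v))^2$ appears from the $\tfrac{\psi''(1)}{2}(\alpha-1)^2$ contribution. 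Truncating to first order in $\varepsilon$ and dividing by $\varepsilon$ immediately gives (\ref{ELL}); only $\psi \in C^1$ is needed here.

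For (\ref{EL1}), recall from Subsection \ref{SGF} that $\Gamma^\psi = \Delta^{\overline{\psi}}$ with $\overline{\psi}(x) := \psi'(1)(x-1) - (\psi(x) - \psi(1))$, hence $\overline{\psi}'(1) = 0$ and $\overline{\psi}''(1) = -\psi''(1)$. The master expansion applied with $\psi$ replaced by $\overline{\psi}$ then loses its first-order term and its $\overline{\psi}'(1)\, g \Delta g$ term, leaving
\[
\Gamma^\psi(1+\varepsilon g) = -\varepsilon^2 \psi''(1) \Gamma(g) + O(\varepsilon^3),
\]
from which (\ref{EL1}) is immediate.

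For (\ref{EL2}), the most efficient route is via the key identity (\ref{repGG}), which recasts the spatial expression $\Gamma_2^\psi$ as a space-time expression along a heat flow. Extend $f$ to a positive heat solution by the semigroup: set $v(\cdot,t) := e^{t\Delta} g$ and $u := 1 + \varepsilon v$. Then $\LL u = 0$, $u(\cdot,0) = f$, and $\partial_t v = \Delta v$. Feeding $v(\cdot,t)$ into the master expansion and multiplying by $u$, the two $\varepsilon^2\, v\Delta v$ contributions cancel, giving
\[
-u \Delta^\psi u = -\varepsilon \psi'(1) \Delta v - \varepsilon^2 \psi''(1) \Gamma(v) + O(\varepsilon^3).
\]
Applying $\LL$, the $\varepsilon$-order piece vanishes because $\partial_t \Delta v = \Delta \partial_t v = \Delta^2 v$. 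For the $\varepsilon^2$ piece, the product rule $\Delta(v \Delta v) = 2\Gamma(v,\Delta v) + v \Delta^2 v + (\Delta v)^2$ together with $\partial_t v = \Delta v$ yields $\partial_t \Gamma(v) = 2 \Gamma(v,\Delta v)$, so
\[
\LL(-u \Delta^\psi u) = -\varepsilon^2 \psi''(1)\bigl[\Delta \Gamma(v) - 2 \Gamma(v,\Delta v)\bigr] + O(\varepsilon^3) = -2\varepsilon^2 \psi''(1) \Gamma_2(v) + O(\varepsilon^3).
\]
Evaluating at $t=0$, invoking (\ref{repGG}) to equate the left side with $2u\Gamma_2^\psi u$, and dividing by $2u = 2 + O(\varepsilon)$ produces $\Gamma_2^\psi(1+\varepsilon g) = -\varepsilon^2 \psi''(1) \Gamma_2(g) + O(\varepsilon^3)$, as required.

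The main obstacle is bookkeeping in the master expansion at order $\varepsilon^2$, and in particular verifying that the non-gradient contributions $-\psi'(1)\, g \Delta g$ cancel at the appropriate moment in each of steps (\ref{EL1}) and (\ref{EL2}). Routing step (\ref{EL2}) through the key identity (\ref{repGG}) is what bypasses the considerably messier direct expansion of $\Omega^\psi$, $(\Delta f)(\Delta^\psi f)/f$, and $\Delta(f \Delta^\psi f)/f$ separately.
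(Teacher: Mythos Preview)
Your proof is correct. For (\ref{ELL}) and (\ref{EL1}) your master expansion is essentially the paper's argument---direct Taylor expansion of $\psi$ (resp.\ $\overline\psi$) at $1$---though the paper proceeds in the opposite order, first establishing (\ref{EL1}) and then deducing (\ref{ELL}) from it via the gradient representation $-\Delta^\psi f = \Gamma^\psi(f) - \psi'(1)\Delta f/f$. The genuine difference is in (\ref{EL2}): the paper expands $\Gamma_2^\psi$ directly from its definition, rewriting $2g\Gamma_2^\psi(g)$ as a sum of three pieces built from the auxiliary functions $\nu(t)=\psi'(t)-\psi'(1)$ and $\omega(t)=\psi(t)-t\psi'(t)$ and computing the $\varepsilon\to 0$ limit of each piece separately. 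Your route through the key identity (\ref{repGG}) with a heat extension $u=1+\varepsilon e^{t\Delta}g$ is conceptually cleaner and sidesteps that bookkeeping entirely; the troublesome $\psi'(1)$-contributions disappear automatically because $\LL(\Delta v)=0$ along the flow. One technical caveat: your remainders should be $o(\varepsilon^2)$ rather than $O(\varepsilon^3)$, since only $\psi\in C^2$ is assumed, and because $\LL$ contains $\partial_t$ you should verify that the $o(\varepsilon^2)$ error in $-u\Delta^\psi u$ survives time differentiation. This follows readily from the integral Taylor remainder $R(s)=\int_0^s(s-\tau)[\psi''(1+\tau)-\psi''(1)]\,d\tau$, whose $s$-derivative is $o(s)$ by continuity of $\psi''$, so that $\partial_t R(s_\varepsilon)=o(\varepsilon)\cdot O(\varepsilon)=o(\varepsilon^2)$; with that small check your argument is complete.
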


\begin{rem}
The above limits can be understood as directional derivatives of the $\psi$-operators to the direction $f$ at the constant function $1$.
For a function $H : C(V) \to C(V)$ and for $f,g \in C(V)$ and for $x \in V$, we can define $h : \IR \to \IR, \; t \mapsto H(g + tf)(x)$ and the directional derivatives
$\partial_f H, \partial^{2}_f H: C(V) \to C(V)$ via
$ \big[\partial_f H (g)\big](x) = h'(0)$ and $ \big[\partial_f^2 H (g)\big](x) = h''(0)$. By using this notation, for $\psi \in C^2(\posR)$ with $\psi'(1)\neq 0 \neq \psi''(1)$, the above theorem can be written as
\begin{eqnarray*}
\Delta f    &=& \left[ \partial_f \Delta^\psi \right] (1) / \psi'(1) 														  	,\\
\Gamma(f)   &=& -\frac 1 2 \left[  \partial^2_f \Gamma^\psi \right] (1) / \psi''(1) 									,\\
\Gamma_2(f) &=& -\frac 1 2 \left[  \partial^2_f \Gamma_2^\psi \right] (1) / \psi''(1) 								.
\end{eqnarray*}
\end{rem}

\begin{proof}
First, we prove (\ref{EL1}). Let $x \in V$, let $\psi \in C^2(\posR)$ and let $f \in C(V)$.  By the definition of $\Gamma^\psi$, we can write
\[
\frac 1 {\varepsilon^2} \Gamma^{\psi}(1+ \varepsilon f)(x) = \Delta \frac 1 {\varepsilon^2} \overline{\psi} \left( \varepsilon \cdot \frac  {f-f(x)}{1+\varepsilon f(x)} + 1 \right)(x).
\] 
We recall $\overline{\psi}(t) =  \psi'(1) (t-1) - \psi(t) $. W.l.o.g., $\psi(1)=0$ and, hence, $\overline{\psi}(1)=0=\overline{\psi}'(1)$ and $\overline{\psi}''(1)=-\psi''(1)$.
We apply Taylor's theorem to $\overline{\psi}$ at point $1$ to obtain
\begin{equation*}
\overline{\psi} \left( \varepsilon \cdot \frac  {f-f(x)}{1+\varepsilon f(x)} + 1 \right) = \left(\frac 1 2 \overline{\psi}''(1)+ h(t_\varepsilon)  \right) t_\varepsilon^2 
\end{equation*}
with $t_\varepsilon = \varepsilon \cdot \frac  {f-f(x)}{1+\varepsilon f(x)}$ and a function $h: \IR \to \IR$ with $\lim_{s\to 0} h(s) = 0$.
Thus, we can calculate the pointwise limit
\[
\lim_{\varepsilon \to 0} \frac 1 {\varepsilon^2} \overline{\psi} \left( \varepsilon \cdot \frac  {f-f(x)}{1+\varepsilon f(x)} + 1 \right) = \frac 1 2 \overline{\psi}''(1)   (f-f(x))^2.
\]
Since we are on finite graphs, we can  do the following computation at the point $x \in V$:
\begin{eqnarray*}
\lim_{\varepsilon \to 0} \frac 1 {\varepsilon^2} \Gamma^\psi (1+ \varepsilon f)  
&=& \Delta \lim_{\varepsilon \to 0} \frac 1 {\varepsilon^2} \overline{\psi} \left( \varepsilon \cdot \frac  {f-f(x)}{1+\varepsilon f(x)} + 1 \right)\\
&=& \Delta  \frac 1 2 \overline{\psi}''(1)   (f-f(x))^2\\
&=& -\psi''(1) \Gamma(f).
\end{eqnarray*}

Next, we prove (\ref{ELL}) similarly. Let $f \in C(V)$ and let $\psi \in C^1(\posR)$. 
Since $\overline{\psi}(1)=0$, we observe that 
\[
\frac 1 \varepsilon  \Gamma^\psi (1+ \varepsilon f) (x) = \Delta \frac 1 \varepsilon  \overline{\psi} \left(\frac{1+ \varepsilon f}{1+ \varepsilon f(x)}\right) (x) = \Delta \frac 1 \varepsilon \left[ \overline{\psi} \left( \varepsilon \cdot \frac  {f-f(x)}{1+\varepsilon f(x)} + 1 \right) -\overline{\psi}(1) \right](x)
\]
for all $x \in V$. Due to Taylor's theorem of the first order and since $\overline{\psi}'(1)$=0, we can compute the pointwise limit
\[
\lim_{\varepsilon \to 0} \frac 1 {\varepsilon} \overline{\psi} \left( \varepsilon \cdot \frac  {f-f(x)}{1+\varepsilon f(x)} + 1 \right) =  \overline{\psi}'(1)   (f-f(x)) = 0.
\]
Hence, we have $\frac 1 \varepsilon  \Gamma^\psi (1+ \varepsilon f) \stackrel{\varepsilon \to 0}{\longrightarrow} 0$.
We use the gradient representation of $\Delta^\psi$ (cf. Lemma~\ref{LGR}) to obtain
\begin{eqnarray*}
\frac 1 \varepsilon \Delta^\psi (1+ \varepsilon f) &=& \frac 1 \varepsilon  \psi'(1) \frac {\Delta (1+ \varepsilon f)}{1+ \varepsilon f} - \frac 1 \varepsilon  \Gamma^\psi (1+ \varepsilon f) \\
&=&  \psi'(1) \frac {\Delta  f}{1+ \varepsilon f} - \frac 1 \varepsilon  \Gamma^\psi (1+ \varepsilon f)\\
&\stackrel{\varepsilon \to 0}{\longrightarrow}& \psi'(1)  {\Delta  f}.
\end{eqnarray*}

Finally, we prove (\ref{EL2}).
Let $x \in V$, let $\psi \in C^2(\posR)$ and let $g \in C^+(V)$. Again, we use the gradient representation of $\Delta^\psi$ (cf. Lemma \ref{LGR}) and calculate
\begin{equation}
\Delta(g \Delta^\psi g) = \Delta (\psi'(1) \Delta g) - \Delta \left( g \Gamma^\psi (g) \right). \label{LgGg}
\end{equation}
We recall the non-linear operator $\Omega^\psi$ from the definition of $\Gamma_2^\psi$  (cf. Definition \ref{DefG2}). We can write
\begin{eqnarray}
\left[g\Omega^\psi(g)\right](x) 
&=& \left[  g \Delta \left[ \psi'\left(\frac g {g(x)}\right) \frac g {g(x)} \cdot \left( \frac {\Delta g} g - \frac {\Delta g (x)}{g(x)} \right) \right] \right](x) \nonumber\\
&=& \left[ \Delta \left[ \psi'\left(\frac g {g(x)}\right) \Delta g \right] - \Delta g \cdot \Delta \left[ \psi'\left(\frac g {g(x)}\right) \frac g {g(x)} \right] \right](x). \label{gOg}
\end{eqnarray}
We define $\nu(t)= \psi'(t)-\psi'(1)$ and $\omega(t)= \psi(t)-t\psi'(t)$ for $t \in \posR$. By using the linearity of $\Delta^\psi$ in the parameter and by resolving $g\Omega^\psi(g)$ with 
(\ref{gOg}) and $\Delta(g \Delta^\psi g)$ with (\ref{LgGg}), we obtain
\begin{eqnarray*}
		\left[    2g \Gamma_2^\psi(g) \right](x)  
&=& \left[    g\Omega^\psi(g) + \Delta g \Delta^\psi g - \Delta(g \Delta^\psi g) \right](x) \\
&=& \left[    \Delta\left[\nu \left( \frac g {g(x)} \right) \Delta g \right] + \Delta g \Delta^\omega g + \Delta \left( g \Gamma^\psi (g) \right) \right](x).
\end{eqnarray*}
Let $f \in C(V)$. We set $g = g_\varepsilon = 1+\varepsilon f$ and get
\[
\left[   
\frac {2g_\varepsilon} {\varepsilon^2}  \Gamma_2^\psi(1+ \varepsilon f)
\right](x)       
= 
\left[
\frac 1 {\varepsilon^2} \Delta\left[\nu \left( \frac {g_\varepsilon} {g_\varepsilon(x)} \right) \Delta g_\varepsilon \right] + 
\frac 1 {\varepsilon^2} \Delta g_\varepsilon \Delta^\omega g_\varepsilon + 
\frac 1 {\varepsilon^2} \Delta \left( g_\varepsilon \Gamma^\psi (g_\varepsilon) \right)
\right](x)  .
\]
Now, we compute the limits of the three summands on the right hand side. We start with the last one and proceed backwards.

Since $g_\varepsilon \stackrel{\varepsilon \to 0}{\longrightarrow} 1$ and 
$\frac 1 {\varepsilon^2}  \Gamma^\psi (g_\varepsilon) \stackrel{\varepsilon \to 0}{\longrightarrow} -\psi''(1) \Gamma(f)$ which was proven above, one has
\[
\frac 1 {\varepsilon^2} \Delta \left( g_\varepsilon \Gamma^\psi (g_\varepsilon) \right)   \stackrel{\varepsilon \to 0}{\longrightarrow} -\psi''(1) \Delta  \Gamma (f).
\] 

Since $\frac 1{\varepsilon} \Delta  g_\varepsilon = \Delta f$ and 
$\frac 1 {\varepsilon}  \Delta^\omega (g_\varepsilon) \stackrel{\varepsilon \to 0}{\longrightarrow} \omega'(1) \Delta f  = -\psi''(1) \Delta f$ which was proven above, one has
\[
\frac 1 {\varepsilon^2} \Delta g_\varepsilon \Delta^\omega g_\varepsilon  \stackrel{\varepsilon \to 0}{\longrightarrow} -\psi''(1) (\Delta f)^2.
\] 

The function $\nu$ is differentiable since $\psi \in C^2(\posR)$, and obviously, $\nu(1)=0$. By defining $\delta := \varepsilon \cdot \frac {f-f(x)}{g_\varepsilon}$, we obtain
\[
\frac 1 \varepsilon \nu \left( \frac {g_\varepsilon}{g_\varepsilon(x)}\right) =   \frac{1}{\delta} \big[ \nu(1+\delta)-\nu(1) \big] \cdot  \frac {f-f(x)}{g_\varepsilon}
\stackrel{\varepsilon \to 0}{\longrightarrow} \nu'(1)(f-f(x)) = \psi''(1)(f-f(x))
\]
and hence,
\begin{eqnarray*}
\frac 1 {\varepsilon^2} \Delta\left[\nu \left( \frac {g_\varepsilon} {g_\varepsilon(x)} \right) \Delta g_\varepsilon \right](x)
&\stackrel{\varepsilon \to 0}{\longrightarrow}&  \Delta \left[ \psi''(1)(f-f(x)) \Delta f \right] (x) \\
&=& -\psi''(1)\left[ f \Delta \Delta f - \Delta (f \Delta f)\right](x).
\end{eqnarray*}
Putting together the three limits calculated above yields
\begin{eqnarray*}
\frac {2g_\varepsilon} {\varepsilon^2}  \Gamma_2^\psi(1+ \varepsilon f) 
&\stackrel{\varepsilon \to 0}{\longrightarrow}   &      -\psi''(1) \left[ \Delta  \Gamma (f) + (\Delta f)^2  +   f \Delta \Delta f - \Delta (f \Delta f)        \right] \\
&=& -\psi''(1) \left[ \Delta  \Gamma (f)  - 2 \Gamma(f,\Delta f)   \right]\\
&=& -\psi''(1) \cdot 2 \Gamma_2 (f).
\end{eqnarray*}
Since $g_\varepsilon \stackrel{\varepsilon \to 0}{\longrightarrow} 1$, we conclude 
$\frac {1} {\varepsilon^2}  \Gamma_2^\psi(1+ \varepsilon f) \stackrel{\varepsilon \to 0}{\longrightarrow}  -\psi''(1) \Gamma_2 (f) $ which finishes the proof.
\end{proof}

\begin{corollary}
Let $\psi \in C^2(\posR)$ be concave with $\psi''(1)\neq 0 \neq \psi'(1)$ and let $d \in \posR$. Let $G=(V,E)$ be a graph satisfying the $CD\psi(d,0)$ condition.
Then, $G$ also satisfies the $CD\left(\frac{ -\psi''(1)}{\psi'(1)^2}d,0 \right)$ condition.
\end{corollary}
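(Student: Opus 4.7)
The plan is to apply the $CD\psi(d,0)$ inequality to the perturbation $1+\varepsilon f$ of the constant function $1$, and then let $\varepsilon \to 0$ using the limit theorem from the same subsection. Concretely, fix an arbitrary $f \in C(V)$. Since $V$ is finite, $f$ is bounded, so $1 + \varepsilon f \in C^+(V)$ for all sufficiently small $\varepsilon > 0$. I can therefore evaluate the assumed $CD\psi(d,0)$ inequality at $g_\varepsilon := 1 + \varepsilon f$ to obtain
\[
  \Gamma_2^\psi(g_\varepsilon) \;\geq\; \frac{1}{d}\bigl(\Delta^\psi g_\varepsilon\bigr)^2.
\]

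Next, I divide both sides by $\varepsilon^2$, which is positive and so preserves the inequality, yielding
\[
  \frac{1}{\varepsilon^2}\,\Gamma_2^\psi(g_\varepsilon) \;\geq\; \frac{1}{d}\left(\frac{1}{\varepsilon}\,\Delta^\psi g_\varepsilon\right)^{2}.
\]
By the limit theorem, as $\varepsilon \downarrow 0$ the left-hand side tends pointwise to $-\psi''(1)\,\Gamma_2(f)$ (using (\ref{EL2})) and the bracketed expression on the right tends pointwise to $\psi'(1)\,\Delta f$ (using (\ref{ELL})). Passing to the limit on each vertex gives
\[
  -\psi''(1)\,\Gamma_2(f) \;\geq\; \frac{\psi'(1)^2}{d}\,(\Delta f)^2.
\]

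Finally, I use the hypotheses on $\psi$. Concavity together with $\psi''(1) \neq 0$ forces $\psi''(1) < 0$, hence $-\psi''(1) > 0$, and dividing through yields
\[
  \Gamma_2(f) \;\geq\; \frac{\psi'(1)^2}{-\psi''(1)\,d}\,(\Delta f)^2 \;=\; \frac{1}{d'}\,(\Delta f)^2,
\]
with $d' = \frac{-\psi''(1)}{\psi'(1)^2}\,d$. Since $f \in C(V)$ was arbitrary, this is exactly the $CD(d',0)$ condition.

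The whole argument is essentially a one-line consequence of the limit theorem, so there is no real obstacle; the only subtlety is to check the sign bookkeeping when passing from the inequality to its limit, namely that concavity supplies $-\psi''(1) > 0$ so that dividing by it is legal and does not reverse the inequality, and that the assumption $\psi'(1) \neq 0$ ensures the resulting dimension $d'$ is finite (and positive).
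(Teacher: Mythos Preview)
Your proof is correct and follows exactly the same approach as the paper: apply the $CD\psi(d,0)$ inequality to $1+\varepsilon f$, divide by $\varepsilon^2$, and pass to the limit via the limit theorem, then use concavity to justify dividing by $-\psi''(1)>0$. You have simply spelled out a few more details (positivity of $1+\varepsilon f$, preservation of the inequality under division by $\varepsilon^2$) than the paper does.
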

\begin{proof}
Let $f \in C(V)$. Since $G$ satisfies the $CD\psi(d,0)$ condition, we have
\[
-\psi''(1)\Gamma_2(f)=\lim_{\varepsilon \to 0} \frac 1 {\varepsilon^2} \Gamma_2^\psi(1+\varepsilon f) \geq  \lim_{\varepsilon \to 0}  \frac 1 {d \varepsilon^2}\left[ \Delta^\psi(1+\varepsilon f)\right]^2 = \frac {\psi'(1)^2 }{d} (\Delta f)^2.
\]
Since $\psi$ is concave and $\psi''(1) \neq 0$, one has $-\psi''(1)>0$. Thus, we obtain that $G$ satisfies the $CD\left(\frac{ -\psi''(1)}{\psi'(1)^2}d,0 \right)$ condition.
\end{proof}

\begin{example}
On finite graphs, the $CD\log(d,0)$ condition implies the $CD(d,0)$ condition.
\end{example}

The concept of the $\psi$-Laplacian and the $CD\psi$-condition can be extended to infinite graphs. In \cite{Hua2014}, Hua and Lin show that the $CD(d,0)$ condition, together with a geometric completeness property, implies stochastic completeness. Thus, we obtain that the $CD\psi$ condition on infinite graphs also implies stochastic completeness.


\section{Li-Yau inequalities} \label{CLiYau}

In this section, we will give two proofs of the $\psi$-Li-Yau inequality.
First, we will prove the $\psi$-Li-Yau inequality via a maximum principle which was used in \cite{Li1986, Bauer2013}. After that, we will give another characterization of the $CD \psi$ condition by using semigroup methods introduced in \cite{Bakry2006}. This characterization turns out to be a stronger version of the $\psi$-Li-Yau inequality.

  \subsection{A proof via the maximum principle}  

First, we present a monotonicity lemma which can be found in \cite[Lemma 4.1]{Bauer2013}.
This gives an important maximum property of the heat operator.
\begin{lemma} [Monotonicity of $\LL$] 
Let $G = (V,E)$ be a graph and let $g,F : V \times (0,T] \to \IR$ be differentiable functions, such that $g\geq 0$ and such that $F$ attains a local maximum in some $(x_0,t_0)$.
Then, one has
\begin{equation}
  \LL(g)F (x_0,t_0) \geq \LL (gF)(x_0,t_0). \label{MON}
\end{equation}
\end{lemma}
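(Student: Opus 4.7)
The plan is to expand $\LL(g)F - \LL(gF)$ at the point $(x_0,t_0)$ directly from the definition $\LL(u) = \Delta u - \partial_t u$ and show that each of the resulting two contributions (temporal and spatial) is non-negative under the given assumptions. Write
\[
\LL(g) F - \LL(gF) = \bigl[(\Delta g) F - \Delta(gF)\bigr] - \bigl[(\partial_t g) F - \partial_t(gF)\bigr].
\]

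For the temporal part I would apply the Leibniz rule to get $\partial_t(gF) = (\partial_t g) F + g (\partial_t F)$, so the bracketed expression collapses to $g \,\partial_t F$. The key point is that $F$ attains a local maximum at $(x_0,t_0)$ with $t_0 \in (0,T]$, and therefore $\partial_t F (x_0,t_0) \geq 0$ in every case: if $t_0 \in (0,T)$ the derivative vanishes, and if $t_0 = T$ the one-sided maximality forces it to be non-negative. Combined with $g \geq 0$ this gives $g \,\partial_t F \geq 0$ at $(x_0,t_0)$.

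For the spatial part I would unfold the graph Laplacian as $\Delta h (x_0) = \sum_{w \sim x_0} (h(w) - h(x_0))$ for both terms and simplify. A direct calculation yields
\[
(\Delta g)(x_0) F(x_0) - \Delta(gF)(x_0) = \sum_{w \sim x_0} g(w)\bigl[F(x_0) - F(w)\bigr],
\]
where all cross terms involving $g(x_0) F(x_0)$ cancel. Since $(x_0,t_0)$ is a local maximum of $F$, for every neighbor $w$ of $x_0$ one has $F(x_0) - F(w) \geq 0$; together with $g(w) \geq 0$, every summand is non-negative, so the whole expression is $\geq 0$.

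Adding the two non-negative contributions gives $\LL(g)F(x_0,t_0) - \LL(gF)(x_0,t_0) \geq 0$, which is the claimed inequality. I do not anticipate any real obstacle here; the only subtle point is correctly handling the endpoint case $t_0 = T$ when invoking the sign of $\partial_t F$, but the one-sided argument covers this without modification.
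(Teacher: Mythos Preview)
Your proposal is correct and matches the paper's proof essentially line for line: both separate the spatial and temporal contributions, use $g\geq 0$ together with $F(x_0,t_0)\geq F(w,t_0)$ for neighbors $w$ to handle the Laplacian part, and use $g\geq 0$ together with $\partial_t F(x_0,t_0)\geq 0$ (distinguishing the interior case $t_0<T$ from the endpoint $t_0=T$) for the time derivative. The only cosmetic difference is that you package the two estimates as a single difference $\LL(g)F-\LL(gF)\geq 0$, whereas the paper states $\Delta(g)F\geq \Delta(gF)$ and $F\partial_t g \leq \partial_t(gF)$ separately before subtracting.
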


\begin{proof}
A short calculation gives
\begin{eqnarray*}
       \Delta(g)F (x_0,t_0)
&=&    \sum_{y \sim x_0} g(y,t_0)F(x_0,t_0) - g(x_0,t_0)F(x_0,t_0)\\
&\geq& \sum_{y \sim x_0} g(y,t_0)F(y,t_0) - g(x_0,t_0)F(x_0,t_0) \\
&=&    \Delta (gF)(x_0,t_0).
\end{eqnarray*}

Similarly,
\[
F \partial_t g (x_0,t_0) \leq F \partial_t g (x_0,t_0) + g \partial_t F (x_0,t_0) =  \partial_t (gF) (x_0,t_0)
\] 
since $\partial_t F =0$ if $t_0 \in (0,T)$ and $\partial_t F \geq0$ if $t_0=T$.
The difference of these estimates yields the claim.
\end{proof}

\begin{theorem}[$\psi$-Li-Yau inequality] \label{TPLY}
Let $G = (V,E)$ be a finite graph satisfying $CD\psi(D,0)$ and $\sol$ a positive solution to the heat equation on $G$ . Then for all $x\in V$ and $t \in \posR$, one has 
\[
-\Delta^\psi u(x,t) \leq \frac {d} {2 t}.
\]
\end{theorem}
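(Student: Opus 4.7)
The plan is to follow the classical Li-Yau maximum-principle strategy, now driven by the heat-operator characterization of $CD\psi(d,0)$ from Theorem~\ref{tchar} together with the monotonicity lemma (\ref{MON}). Fix $T>0$ and consider the auxiliary function
\[
F(x,t) := -t\,\Delta^\psi u(x,t)
\]
on $V\times[0,T]$, with $F(x,0):=0$. Since $V$ is finite and $u$ is $C^1$ in time and positive, $\Delta^\psi u$ is continuous in $(x,t)$, so $F$ attains its maximum at some $(x_0,t_0)$. If this maximum is $\leq 0$ we are done (with room to spare), so we may assume $t_0>0$. At such a point $F$ has a spatial maximum at $x_0$ and satisfies $\partial_t F(x_0,t_0)\geq 0$ (with equality when $t_0<T$), so the hypotheses of the monotonicity lemma are met.

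Apply (\ref{MON}) with the positive weight $g:=u$, which satisfies $\LL(u)=0$. This gives
\[
0 \;=\; \LL(u)\,F(x_0,t_0) \;\geq\; \LL(uF)(x_0,t_0) \;=\; \LL\bigl(-t\,u\,\Delta^\psi u\bigr)(x_0,t_0).
\]
A direct expansion of $\LL = \Delta-\partial_t$ (using that $t$ is constant in the spatial variable) yields
\[
\LL\bigl(-t\,u\,\Delta^\psi u\bigr) \;=\; u\,\Delta^\psi u \;+\; t\,\LL\bigl(-u\,\Delta^\psi u\bigr).
\]
Now I invoke the $CD\psi(d,0)$ characterization (Theorem~\ref{tchar}), which says precisely that $\LL(-u\Delta^\psi u)\geq \tfrac{2}{d}u(\Delta^\psi u)^2$ for every positive heat solution. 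Combining these inequalities at $(x_0,t_0)$ and dividing by $u>0$ yields
\[
0 \;\geq\; \Delta^\psi u \;+\; \frac{2t_0}{d}(\Delta^\psi u)^2 \;=\; \Delta^\psi u\left(1+\frac{2t_0}{d}\Delta^\psi u\right).
\]
If $\Delta^\psi u(x_0,t_0)\geq 0$ then $F(x_0,t_0)\leq 0$; otherwise the second factor must be non-negative, forcing $-\Delta^\psi u(x_0,t_0)\leq d/(2t_0)$, that is, $F(x_0,t_0)\leq d/2$. In either case $F\leq d/2$ on all of $V\times[0,T]$, and letting $T$ be arbitrary finishes the argument.

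\textbf{Where the difficulty sits.} The only non-routine ingredient is the step $\LL(uF)\leq \LL(u)F$ at a spatial maximum of $F$: this is exactly the monotonicity lemma and replaces the product-rule/chain-rule manipulations used in the manifold proof. The rest is bookkeeping. The real conceptual work has already been done in setting up $\Delta^\psi$ and $\Gamma_2^\psi$ so that the key identity $\LL(-u\Delta^\psi u) = 2u\Gamma_2^\psi(u)$ holds for heat solutions without any chain rule, which makes the $CD\psi$ hypothesis directly usable inside the heat operator. I expect no further obstacles; I should however be careful that $F$ is defined and continuous down to $t=0$ so the maximum on the compact set $V\times[0,T]$ exists, which is immediate from the assumed regularity and positivity of $u$.
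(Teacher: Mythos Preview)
Your proof is correct and follows essentially the same maximum-principle route as the paper: define $F=-t\,\Delta^\psi u$, locate its maximum on $V\times[0,T]$, apply the monotonicity lemma~(\ref{MON}) together with the $CD\psi$ characterization~(\ref{CCD}), and conclude $F\le d/2$. The only cosmetic difference is the choice of weight in (\ref{MON}): the paper takes $g=u/t$ (so that $\LL(g)=u/t^2$ and $gF=-u\,\Delta^\psi u$ directly), whereas you take $g=u$ and then expand $\LL(-t\,u\,\Delta^\psi u)=u\,\Delta^\psi u + t\,\LL(-u\,\Delta^\psi u)$; the resulting inequalities are algebraically equivalent.
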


\begin{proof}

We define for $x \in V$ and $t \in \nnegR$ 
\[F(x,t) := -t \Delta^\psi u(x,t).\]
It is sufficient to show that for all $T>0$, one has
\[
\sup_{x\in V, 0\leq t\leq T} F(x,t) \leq \frac {d} {2}.
\]
Since $V \times [0,T]$ is compact and $F$ is continuous, the restriction $F\big|_{V\times [0,T]}$ attains its maximum in some $(x_0,t_0)$.
We assume without loss of generality that $F(x_0,t_0)$ is positive. Since $F(\cdot,0)=0$, we can deduce $t_0 > 0$ and thus, the maximum is attained on $V \times (0,T]$. 
Hence, we can use the estimate (\ref{CCD}) from the characterization of $CD\psi$ and the estimate (\ref{MON}) from the monotonicity lemma with $g(\cdot,t) := \frac u t$ for all $t \in(0,T]$.
The following computation is understood to take place at the point $(x_0,t_0)$. We obtain

\begin{eqnarray*}
 		  									  											\frac u {t_0^2} F 
&=&																							\LL \left(g\right) F
\stackrel{(\ref{MON})}{\geq}										\LL \left( gF \right)  
= 																							\LL \left(-u\Delta^\psi u  \right)  
\stackrel{(\ref{CCD})}{\geq} 										\frac {2}{d} u (\Delta^\psi u)^2
=  																							\frac {2}{d} \frac u {t_0^2} F^2. 
\end{eqnarray*}

Since $F(x_0,t_0) >0$, we can conclude $F \leq \frac {d} {2}$.
\end{proof}

  \subsection{A proof via semigroup methods}

Next, we use semigroup methods to give another proof of the $\psi$-Li-Yau inequality.
Moreover, we give another characterization of the $CD\psi$ condition which is inspired by a similar result that Bakry and Ledoux showed on diffusion  semigroups (cf. \cite{Bakry2006}). 

\begin{defn}[Operator semigroup] 
Let $G=(V,E)$ be a finite graph. The Laplacian $\Delta$ generates an \emph{operator semigroup} $\left(P_t\right)_{t\geq 0}: C(V) \to C(V)$ with 
\[
P_t f := e^{\Delta t} f := \sum_{k=0}^{\infty} \frac {t^k \Delta^k f}{k!} \mbox{ for all } t \in \nnegR \mbox{ and all } f \in C(V). 
\]
\end{defn} 
 
\begin{proposition}[Basic properties of operator semigroups] {\label{SG}}
Let $G=(V,E)$ be a finite graph with Laplacian $\Delta$ and the generated semigroup $\left(P_t\right)_{t\geq 0}$.
\eBr
  \item
		The semigroup satisfies the property 
		
		${P_t ( P_s f ) = P_{t+s} f}$ for all $f \in C(V)$ and all $s,t \geq 0$. 
  \item
	  The semigroup gives a solution to the heat equation. 
	  I.e. $P_t f$ is continuous in $t$ for all $t \geq 0$ and $f \in C(V)$, and furthermore  $\LL ( P_t f) = 0$ for all $t \geq 0$ and all $f 			\in C(V)$.
	\item  
	  The semigroup satisfies the initial condition $P_0 f = f$ for all $f \in C(V)$.
  \item
		If the initial condition $f \in C(V)$ is positive, then $P_t f \in C(V)$ is also positive for all $t \geq 0$.
\end{enumerate}

\end{proposition}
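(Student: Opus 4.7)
The plan is to treat $P_t = e^{t\Delta}$ as the usual matrix exponential. Since $V$ is finite, $C(V)$ is a finite-dimensional Banach space under the sup-norm and $\Delta$ is a bounded linear operator, so the defining series $\sum_{k=0}^{\infty} \frac{t^k \Delta^k f}{k!}$ converges absolutely and uniformly on every compact time interval (via the comparison $\|\Delta^k\| \leq \|\Delta\|^k$). Part (3) is then immediate, since only the $k=0$ term survives at $t=0$. For part (1), the operator $\Delta$ commutes with itself, so taking the Cauchy product of two absolutely convergent exponential series yields
\[
P_t(P_s f) = e^{t\Delta} e^{s\Delta} f = e^{(t+s)\Delta} f = P_{t+s} f
\]
exactly as for the scalar exponential. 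Part (2) follows from termwise differentiation of the power series, which is justified by the uniform convergence: one gets $\partial_t P_t f = \Delta P_t f$, and hence $\LL(P_t f) = \Delta P_t f - \partial_t P_t f = 0$; continuity of $t \mapsto P_t f$ is likewise a direct consequence of uniform convergence.

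The main obstacle is part (4), because the power series for $P_t f$ contains terms of both signs even when $f > 0$ (the powers $\Delta^k f$ involve differences of values of $f$). The standard remedy is to shift by a constant multiple of the identity. Set $M := \max_{v \in V} \deg v$; then, regarded as a matrix with respect to the standard basis of $C(V)$, the operator $\Delta + M I$ has non-negative entries (diagonal entries $M - \deg v \geq 0$, off-diagonal entries either $0$ or $1$). Consequently every power $(\Delta + MI)^k$ has non-negative entries, and so does $e^{t(\Delta + MI)} = \sum_{k} \frac{t^k (\Delta+MI)^k}{k!}$ for each $t \geq 0$. Since $\Delta$ commutes with $MI$, we can factor
\[
P_t = e^{t\Delta} = e^{-Mt}\, e^{t(\Delta + MI)},
\]
which shows that $P_t$ preserves non-negativity. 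To upgrade this to strict positivity, note that $\Delta \mathbf{1} = 0$, so $P_t \mathbf{1} = \mathbf{1}$. Given $f \in C^+(V)$, set $c := \min_{v \in V} f(v) > 0$; then $f - c \geq 0$, so $P_t(f-c) \geq 0$, and linearity yields $P_t f \geq c\, P_t \mathbf{1} = c > 0$, finishing part (4).
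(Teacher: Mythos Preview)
Your proof is correct. The paper does not actually prove this proposition at all; it simply states that the result is standard and refers the reader to \cite{Chung2000}, so your argument is in fact more detailed than what the paper provides. The matrix-exponential treatment you give for parts (1)--(3) and the shift trick $P_t = e^{-Mt} e^{t(\Delta+MI)}$ for part (4), followed by the comparison $f \geq c\mathbf{1}$ to upgrade non-negativity to strict positivity, are exactly the standard arguments one would expect for a finite-dimensional semigroup generated by a graph Laplacian.
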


  This proposition is a standard one and a proof can be found e.g. in \cite{Chung2000}.

\begin{theorem}[Semigroup form of the $\psi$-Li-Yau inequality] 
Let $\psi \in C^1(\posR)$, and let $G=(V,E)$ be a finite graph with Laplacian $\Delta$ which generates the semigroup $\left(P_t\right)_{t\geq 0}$. 
Then, the following statements are equivalent.
\eChar
	\item $G$ satisfies $CD\psi({d},0)$.
	\item For all positive functions $f \in C^+(V)$ and all $t\geq 0$, one has
	  \[
	    P_t f \Delta^\psi P_t f \geq P_t(f \Delta^\psi f)\left(1 + \frac {{2} t} n \Delta^\psi P_t f  \right).
	  \]
\end{enumerate}

If one of these statements is true, then the $\psi$-Li-Yau inequality
\[
-\Delta^\psi P_t f \leq \frac n {2 t}
\]
holds for all $f \in C^+(V)$ and all $t>0$.

\end{theorem}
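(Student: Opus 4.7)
The plan is to run a Bakry--Ledoux style interpolation along the heat semigroup. Set $u_s := P_s f$, which is positive by Proposition~\ref{SG}, and consider
\[
\Lambda(s) := P_{t-s}\bigl(u_s\,\Delta^\psi u_s\bigr), \qquad s \in [0,t].
\]
The endpoints are exactly $\Lambda(0) = P_t(f\Delta^\psi f)$ and $\Lambda(t) = P_t f \cdot \Delta^\psi P_t f$, i.e.\ the two quantities appearing in (ii), so the content of (ii) is a comparison between $\Lambda(0)$ and $\Lambda(t)$ that I would extract from an autonomous differential inequality for $\Lambda$.

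For the direction (i)$\Rightarrow$(ii), I would first differentiate $\Lambda$. Since $P_{t-s}$ commutes with $\Delta$ and $\partial_s u_s = \Delta u_s$, a short calculation yields $\Lambda'(s) = -P_{t-s}\bigl(\LL(u_s\Delta^\psi u_s)\bigr)$, and the key identity~(\ref{repGG}) turns this into $\Lambda'(s) = 2 P_{t-s}(u_s\,\Gamma_2^\psi u_s)$. The $CD\psi(d,0)$ hypothesis then gives $\Lambda'(s) \geq \tfrac{2}{d} P_{t-s}\bigl(u_s(\Delta^\psi u_s)^2\bigr)$. A Cauchy--Schwarz inequality for the probability kernel $p_{t-s}(x,\cdot)$ (which is Markovian, as $P_t 1 = 1$), applied to the factorisation $u_s(\Delta^\psi u_s)^2 = (\sqrt{u_s})^2 \cdot (\sqrt{u_s}\,\Delta^\psi u_s)^2$ together with $P_{t-s} u_s = u_t$, bounds the right-hand side below by $\Lambda(s)^2/u_t$. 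So $\Lambda$ satisfies the pointwise ODE inequality $\Lambda'(s) \geq \tfrac{2}{d u_t}\,\Lambda(s)^2$. Integrating $(-1/\Lambda)' \geq 2/(d u_t)$ from $0$ to $t$ and clearing denominators (with care for the signs) produces exactly the inequality (ii).

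For the converse, I would subtract $f\Delta^\psi f$ from both sides of (ii), divide by $t$, and let $t \downto 0$. At $t=0$ both sides equal $f\Delta^\psi f$, so the derivative of the LHS minus that of the RHS must be nonnegative. Unpacking these derivatives with the product rule and the computation of $\partial_t \Delta^\psi u$ already performed in the proof of~(\ref{repGG}) gives $f\,\Omega^\psi f + \Delta f\cdot \Delta^\psi f - \Delta(f\Delta^\psi f) - \tfrac{2}{d} f(\Delta^\psi f)^2 \geq 0$, and the first three terms collect to $2 f\,\Gamma_2^\psi f$ via Definition~\ref{DefG2}. Since $f \in C^+(V)$ is arbitrary, this is $CD\psi(d,0)$.

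Finally, the pointwise estimate $-\Delta^\psi P_t f \leq d/(2t)$ drops out of the same ODE: since $\Lambda' \geq 0$, $\Lambda$ is monotone, so either $\Lambda(t) \geq 0$ (in which case $-\Delta^\psi u_t \leq 0$ and the bound is trivial), or $\Lambda(0), \Lambda(t) < 0$, in which case integrating $(-1/\Lambda)'\geq 2/(du_t)$ over $[0,t]$ gives $-1/\Lambda(t) \geq 2t/(du_t) - 1/\Lambda(0) \geq 2t/(du_t)$, hence $-\Delta^\psi u_t \leq d/(2t)$. The main technical obstacle I anticipate is precisely this bookkeeping of the sign of $\Lambda$ when inverting the differential inequality, since $\Delta^\psi u_s$ is not sign-definite; monotonicity of $\Lambda$ rules out interior sign changes, and the borderline case $\Lambda \equiv 0$ is handled by a limiting argument where (ii) reduces to $0 \geq 0$.
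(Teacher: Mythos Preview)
Your proposal is correct and follows essentially the same route as the paper: both define the interpolation $\Lambda(s)=\phi(s)=P_{t-s}(P_sf\,\Delta^\psi P_sf)$, differentiate via the key identity~(\ref{repGG}), apply $CD\psi$ and Cauchy--Schwarz to obtain $\phi'\geq \tfrac{2}{n}\phi^2/P_tf$, integrate $(-1/\phi)'$ with the same sign case-split, and run the converse by differentiating (ii) at $t=0$. The only cosmetic differences are that the paper deduces the Li--Yau bound from statement~(ii) itself rather than directly from the ODE, and your written Cauchy--Schwarz ``factorisation'' has a stray factor of $u_s$ (you want $u_s\Delta^\psi u_s=\sqrt{u_s}\cdot(\sqrt{u_s}\,\Delta^\psi u_s)$, not the squared version), though the intended inequality is clearly the correct one.
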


The proof of this theorem is in the spirit of Bakry and Ledoux (cf. \cite[Theorem 1]{Bakry2006}). In contrast to their proof, we are able to bypass the chain rule by using Theorem \ref{tchar} which is the characterization of the $CD\psi$ condition. It seems to be quite a challenge to obtain logarithmic Sobolev inequalities (cf. \cite[inequalities (1.10), (1.11)]{Bakry2006}) on graphs by using our methods. Nevertheless in \cite{Chung2000}, Chung, Grigor'yan and Yau have shown another version of the Sobolev inequality on manifolds and on graphs.

\begin{proof}

First, we show $\emph{(\ref{2})} \Rightarrow \emph{(\ref{1})}$. 
We will use identity (\ref{repGG}) from the representation lemma of $\Gamma_2^\psi$.
For all $f \in C^+(V)$, the assumption $\emph{(\ref{2})}$ implies 
\begin{eqnarray*}
0
&\leq& \lim_{t \to 0} \frac 1 {2t} \left[ P_tf \Delta^\psi P_t f - P_t (f\Delta^\psi f) - \frac {{2} t} n P_t(f \Delta^\psi f) \Delta^\psi P_t f \right] \\
&=&    \lim_{t \to 0} \frac 1 {2t} \left[ (P_tf \Delta^\psi P_t f - P_0f \Delta^\psi P_0 f) - (P_t (f\Delta^\psi f - P_0 (f\Delta^\psi f))\right] \\&& - \frac {{1}} {n} f (\Delta^\psi f)^2\\
&=&    \frac 1 2 \partial_t\left[ P_tf \Delta^\psi P_t f - P_t (f\Delta^\psi f) \right]_{t=0}   - \frac {{1}} {n} f (\Delta^\psi f)^2 \\
&=&    \frac 1 2 \left[ \partial_t( P_tf \Delta^\psi P_t f) -  \Delta P_0 (f\Delta^\psi f) \right]_{t=0}   - \frac {{1}} {n} f (\Delta^\psi f)^2 \\
&=&    \frac 1 2 \left[ \partial_t( P_tf \Delta^\psi P_t f) -  \Delta  (P_t f\Delta^\psi P_t f) \right]_{t=0}   - \frac {{1}} {n} f (\Delta^\psi f)^2 \\
&=&    \frac 1 2 [\LL (-P_tf \Delta^\psi P_t f)]_{t=0} - \frac {{1}} {n} f (\Delta^\psi f)^2 \\
&\stackrel{(\ref{repGG})}{=}&
								 \left[P_tf {\Gamma_2^\psi} (P_tf)  \right]_{t=0} - \frac {{1}} {n} f (\Delta^\psi f)^2 \\
&=&    f  {\Gamma_2^\psi} (f) - \frac {{1}} {n} f (\Delta^\psi f)^2.
\end{eqnarray*}
Thus,
\[
{\Gamma_2^\psi} (f) \geq \frac {{1}} {n} (\Delta^\psi f)^2.
\]
Since $f$ is arbitrary, we obtain the $CD\psi(n,0)$ inequality.

Next, we show $\emph{(\ref{1})} \Rightarrow \emph{(\ref{2})}$.

Let $f \in C(V)$ and $t>0$.
Following the notation of Bakry and Ledoux, for $s \in [0,t]$, we denote
\begin{eqnarray*}
	g	    &:=&	P_s f,\\
	A	    &:=& g\Delta^\psi g,\\
\phi(s) &:=& P_{t-s}(P_s f \Delta^\psi P_s f) = P_{t-s}(A).
\end{eqnarray*}

We take the derivative of $\phi$. By using identity (\ref{repGG}) from the representation lemma of $\Gamma_2^\psi$, we obtain

\begin{eqnarray*}
\phi'(s)
&=& \partial_s  P_{t-s}(A)
= -\Delta  P_{t-s}(A) + P_{t-s}(\partial_s A)
= P_{t-s}\LL (-A) = P_{t-s}\LL (-g\Delta^\psi g)\\
&\stackrel{(\ref{repGG})}{=}& P_{t-s}(2 g {\Gamma_2^\psi}  (g))
\stackrel{CD\psi}{\geq}\frac {{2}} n   P_{t-s}\left(   g \left(\Delta^\psi g\right)^2 \right) =  \frac {{2}} n   P_{t-s}\left(\frac {A^2} g \right)\\
&\geq&   \frac {{2}} n   \frac {(P_{t-s}A)^2}{P_{t-s} g}
= \frac {{2}} n  \frac {\phi(s)^2}{P_t f} \geq 0.
\end{eqnarray*}
In the first inequality, we also used the positivity of $P_{t-s}$.

This calculation implies
\begin{equation}
P_tf \Delta^\psi P_t f = \phi(t)\geq \phi(0) = P_t(f \Delta^\psi f).\label{ptpo}
\end{equation}

Suppose $\Delta^\psi P_t f \geq 0$ and $P_t(f \Delta^\psi P_t f)\leq 0$. Then,
the claim is obvious.

Suppose not. Since $\phi$ is monotonically non-decreasing, we can conclude $\phi(s) \neq 0$ for all $s \in [0,t]$ . Thus, we obtain
\[
     - \left( \frac 1 {\phi} \right)'
=    \frac {\phi'}{\phi^2} 
\geq \frac {{2}} {n P_t f}.
\]
By integrating this identity from $0$ to $t$, we get
\[
\frac 1 {\phi(0)} -  \frac 1 {\phi(t)} \geq  \frac {{{2}} t} {n P_t f}.
\]

Since $\phi(0)$ and $\phi(t)$ have the same sign, we see
\[
{\phi(t)} -  {\phi(0)} \geq  \phi(t) \phi(0)  \frac {{{2}}t} {n P_t f}.
\]
Substituting $\phi(t)$ and $\phi(0)$ into this estimation yields the result.

Now, we will deduce the $\psi$-Li-Yau inequality from the equivalent statements $\emph{(\ref{1})}$ and $\emph{(\ref{2})}$.
Suppose $\Delta^\psi P_t f > 0$. Then, the claim is obvious. 

Suppose not. We have already seen in inequality (\ref{ptpo}) that $\emph{(\ref{1})}$ implies $P_tf \Delta^\psi P_t f \geq P_t(f \Delta^\psi f)$. Thus,
\[
0 \geq P_tf \Delta^\psi P_t f \geq P_t(f \Delta^\psi f).
\]
Consequently, $\emph{(\ref{2})}$ implies
\[
1 + \frac {{2} t} n \Delta^\psi P_t f  \geq 0
\]
which is equivalent to the Li-Yau inequality
\[
-\Delta^\psi P_t f \leq \frac n {{2} t}.
\]
This finishes the proof.
\end{proof}

\section{Harnack inequalities} \label{CHarnack}  
 
The Harnack inequality states that if $u$ is a solution to the heat equation and if $(x_i,t_i)_{i=1,2}$ are two points in space-time, then $\frac{u(x_1,t_1)}{u(x_2,t_2)}$ can be estimated by a function only depending on a certain distance of the points $(x_1,t_1)$ and $(x_2,t_2)$.

In this section, we show in which sense the $\psi$-Li-Yau inequality can be understood as a gradient estimate and how to use this property to derive Harnack type inequalities.
To do so, we will use the methods introduced in \cite{Bauer2013} which turn out to be applicable here with minor changes.
We will note in this section that the concavity of $\psi$ is crucial to derive Harnack inequalities from the $\psi$-Li-Yau inequality.
Furthermore, we will see in Subsection \ref{SRF} that we need concavity of $\psi$ to prove  $CD\psi$ inequalities on Ricci-flat graphs.

   \subsection{Preliminaries}

To understand the $\psi$-Li-Yau inequality as a gradient estimate, we use the $\psi$-gradient $\Gamma^\psi$ (see Subsection \ref{SGF}).
This is defined as 
$\Gamma^\psi = \Delta^{\overline{\psi}}$ with
$$\overline{\psi}(x)= \psi'(1)\cdot(x-1)  - (\psi(x) - \psi(1))$$ for all $x>0$ and all concave $\psi \in C(\posR)$.
We recall the gradient representation of $\Delta^\psi$ (cf. Lemma \ref{LGR}).
Let $G=(V,E)$ be a finite graph, and let $\psi \in C^1(\IR)$ be a concave function. Then, the gradient representation of $\Delta^\psi$ states that for all $f \in C^+(V)$, one has
\[
-\Delta^\psi f = \Gamma^\psi (f) - \psi'(1) \frac {\Delta f} f.
\]

This formulation will turn out to be a convenient basis for deducing Harnack inequalities from the Li-Yau inequality.
We only need to introduce one more constant dependent on $\psi$.

\begin{defn}[Harnack constant] \label{DHC} 
Let $\psi \in C^1(\posR)$ be a concave function. We define the \emph{Harnack-constant} $H_\psi$ of $\psi$ as
\begin{equation}
H_\psi := \sup_{x>1} \frac{(\log x)^2}{\overline{\psi}(x)} \in [0,\infty].
\end{equation}
\end{defn}

This constant is defined to give the connection between  $\psi$-Li-Yau type gradient estimates and Harnack inequalities.
The case $H_\psi = \infty$ is allowed, but this only occurs if $\psi''(1)=0$ (cf. Lemma \ref{LDH}).

The following lemma is the key to prove Harnack inequalities by using the methods introduced in \cite[Section 5]{Bauer2013} and gives the link between the $\Gamma^\psi$-calculus and these methods.

\begin{lemma} [Estimate of ${\Gamma^\psi}$] 
Let $\psi \in C^1(\posR)$ be a concave function,
let $G=(V,E)$ be a graph, let $f \in C^+(V)$ and let $v,w \in V$ with $v \sim w$.
Then, 
\begin{equation}
\log {\frac {f(w)}{f(v)}} \leq \sqrt{H_\psi} \sqrt{\Gamma^\psi (f)}(v) \label {EG}.
\end{equation}

\end{lemma}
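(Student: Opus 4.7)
The plan is to reduce the inequality to a pointwise bound that is essentially built into the definition of $H_\psi$, using the representation of $\Gamma^\psi$ as a non-negative sum over neighbors.

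First I would handle the trivial case $f(w) \leq f(v)$ separately: here the left-hand side is non-positive, and the right-hand side is non-negative (since $\overline\psi \geq 0$ by concavity of $\psi$ implies $\Gamma^\psi(f)(v) \geq 0$, as already noted in Subsection~\ref{SGF}, and $H_\psi \geq 0$ by definition). So I may assume $x := f(w)/f(v) > 1$.

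Next I would invoke the representation lemma (Lemma~\ref{LRL}) applied to $\overline\psi$. Since $\overline\psi(1) = \psi'(1)(1-1) - (\psi(1) - \psi(1)) = 0$, we have
\[
\Gamma^\psi(f)(v) = \Delta^{\overline\psi} f(v) = \sum_{u \sim v} \overline\psi\!\left(\frac{f(u)}{f(v)}\right).
\]
Concavity of $\psi$ makes every summand non-negative, so in particular
\[
\overline\psi\!\left(\frac{f(w)}{f(v)}\right) \leq \Gamma^\psi(f)(v),
\]
since $w$ is one of the neighbors of $v$.

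Finally, the definition of the Harnack constant gives, for $x = f(w)/f(v) > 1$,
\[
(\log x)^2 \leq H_\psi \cdot \overline\psi(x) \leq H_\psi \cdot \Gamma^\psi(f)(v),
\]
and taking square roots yields the claim. There is no real obstacle here; the only subtlety to flag is that the argument relies on the concavity of $\psi$ twice, once to guarantee that $\overline\psi$ is non-negative so that dropping the other neighbor terms in $\Gamma^\psi(f)(v)$ is valid, and once implicitly to ensure $H_\psi$ is a sensible quantity (the supremum is taken over $x > 1$, matching exactly the case we reduced to).
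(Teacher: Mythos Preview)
Your proof is correct and follows essentially the same route as the paper: both arguments split off the trivial case $f(w)\le f(v)$, use the representation $\Gamma^\psi(f)(v)=\sum_{u\sim v}\overline\psi(f(u)/f(v))$ together with $\overline\psi\ge 0$ to drop all but the $w$-term, and then invoke the definition of $H_\psi$ for $x=f(w)/f(v)>1$. The paper merely packages the case split as a preliminary scalar inequality $\log x \le \sqrt{H_\psi}\sqrt{\overline\psi(x)}$ valid for all $x>0$, but the content is identical.
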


\begin{proof}
First, we show the following claim. 	For all $x \in \posR$, one has
\begin{equation}
\log x \leq \sqrt{H_\psi}\sqrt{\overline{\psi}(x)}. \label{claimG}
\end{equation}

This claim is obvious for $x \leq 1$, since we obtain $\log x \leq 0$ in this case.
If $x > 1$, the claim follows from the definition of $H_\psi$.

For the assertion of the lemma, we use $\overline{\psi} \geq 0$ and apply inequality (\ref{claimG}) proven above with $x = \frac {f(w)}{f(v)}$ to compute 
\begin{eqnarray*}
			  \sqrt{H_\psi} \sqrt{\Gamma^\psi (f)}(v)
&=&			\sqrt{H_\psi} \sqrt{\Delta^{\overline{\psi}} f}(v) \\  
&\stackrel{(\ref{repG})}{=}&     \sqrt{H_\psi} \sqrt{\sum_{\widetilde v \sim v} \overline{\psi}\left(\frac{f(\widetilde v)}{f(v)}   \right)}\\
&{\geq}&  \sqrt{H_\psi} \sqrt{ \overline{\psi}\left(\frac{f(w)}{f(v)} \right)  }\\
&\geq&   \log {\frac {f(w)}{f(v)}}.
\end{eqnarray*}
This finishes the proof.
\end{proof}

Next, we give a lemma which is a special case of \cite[Lemma 5.3]{Bauer2013}.

\begin{lemma}[Minimal integral estimate] \label{MIL} 
Let $T_1,T_2 \in \IR$ with $T_2 > T_1$.
Let $\gamma: [T_1,T_2] \to \nnegR$ be a continuous function and let $C_1, C_2 \in \posR$ be positive constants. Then, one has 
\begin{equation}
 \frac {C_2^2}{C_1(T_2-T_1)} \geq \inf_{s\in [T_1,T_2]} \left( C_2 \sqrt{\gamma(s)} - C_1 \int_s^{T_2} \gamma(t) dt \right) . \label{HL}
\end{equation}

\end{lemma}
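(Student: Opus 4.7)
My plan is to argue by contradiction using a Riccati-type differential inequality. Set $F(s) := C_2\sqrt{\gamma(s)} - C_1\int_s^{T_2}\gamma(t)\,dt$ and $M := \inf_{s \in [T_1,T_2]} F(s)$; the infimum is attained since $F$ is continuous on the compact interval $[T_1,T_2]$. The case $M \leq 0$ is immediate because the right-hand side $C_2^2/(C_1(T_2-T_1))$ is strictly positive, so I may assume $M > 0$ and aim to derive a contradiction under the extra hypothesis $M > C_2^2/(C_1(T_2-T_1))$.

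Introduce $h(s) := \int_s^{T_2}\gamma(t)\,dt$, which belongs to $C^1([T_1,T_2])$ with $h'(s) = -\gamma(s)$ and $h \geq 0$, and set $y(s) := M + C_1 h(s)$. Then $y(s) \geq M > 0$ on $[T_1,T_2]$ and $y(T_2) = M$. The pointwise inequality $F(s) \geq M$ rearranges to $C_2\sqrt{\gamma(s)} \geq y(s)$, and squaring (which is legitimate since both sides are positive) gives $\gamma(s) \geq y(s)^2/C_2^2$. Substituting into $y'(s) = -C_1\gamma(s)$ yields the Riccati inequality $y'(s) \leq -(C_1/C_2^2)\,y(s)^2$.

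Dividing by $-y(s)^2 < 0$, this is equivalent to $\frac{d}{ds}\bigl(1/y(s)\bigr) \geq C_1/C_2^2$. Integrating over $[T_1,T_2]$ and using $y(T_2)=M$, I obtain
\[
\tfrac{1}{M} - \tfrac{1}{y(T_1)} \;=\; \tfrac{1}{y(T_2)} - \tfrac{1}{y(T_1)} \;\geq\; \tfrac{C_1(T_2-T_1)}{C_2^2}.
\]
The contrary hypothesis $M > C_2^2/(C_1(T_2-T_1))$ translates into $1/M < C_1(T_2-T_1)/C_2^2$, so the above forces $1/y(T_1) < 0$. This contradicts $y(T_1) \geq M > 0$, completing the proof.

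The main obstacle I anticipate is sign-tracking around the squaring step and the direction of the final integration: squaring the inequality $C_2\sqrt{\gamma(s)} \geq M + C_1 h(s)$ is valid only because the reduction to $M > 0$ ensures the right-hand side is positive, and the reciprocal-monotonicity trick hinges on $y$ staying strictly positive throughout the interval (which is built in). Regularity is not an issue since continuity of $\gamma$ immediately gives $h \in C^1$, hence $y \in C^1$, so the fundamental theorem of calculus applies without subtlety.
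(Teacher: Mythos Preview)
Your proof is correct, but it proceeds by a genuinely different route than the paper's. The paper bounds the infimum above by a weighted average with weight $\phi(s)=s-T_1$, interchanges the order of integration in the resulting double integral, and then reduces everything to the pointwise quadratic bound $-C_1 x^2 + 2C_2 x \le C_2^2/C_1$ applied with $x=(s-T_1)\sqrt{\gamma(s)}$. Your argument instead packages the problem as a Riccati differential inequality for $y(s)=M+C_1\int_s^{T_2}\gamma$, integrates $(1/y)'\ge C_1/C_2^2$, and reads off the bound on $M$. Both approaches are short and elementary; the paper's is perhaps more direct (no case split on the sign of $M$, no contradiction), while yours has the pleasant feature that the contradiction is in fact unnecessary: once you have $1/M - 1/y(T_1)\ge C_1(T_2-T_1)/C_2^2$ and $y(T_1)>0$, the conclusion $M\le C_2^2/(C_1(T_2-T_1))$ follows immediately, which would streamline your write-up.
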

We include the proof for convenience of the reader.
\begin{proof}
We estimate the infimum by an average integral with the weight function $\phi:[T_1,T_2] \to \posR, s \mapsto s-T_1$.

\begin{eqnarray*}
		&&		\inf_{s\in [T_1,T_2]} \left( -C_1 \int_s^{T_2} \gamma(t) dt + C_2 \sqrt{\gamma(s)} \right) \\
&\leq&  \frac{ \int_{T_1}^{T_2} \phi(s) \left(-C_1 \int_s^{T_2} \gamma(t) dt + C_2 \sqrt{\gamma(s)}  \right) ds }{\int_{T_1}^{T_2} \phi(s)ds}  \\
&=&     \frac 1 {(T_2-T_1)^2}   \int_{T_1}^{T_2} \left( - 2 C_1  \gamma(s) \int_{T_1}^s \phi(t)       dt +   2 C_2(s-T_1)\sqrt{\gamma(s)}     \right) ds\\
&=&     \frac 1 {(T_2-T_1)^2}   \int_{T_1}^{T_2} \left( -  C_1  \gamma(s) (s-T_1)^2 +   2 C_2(s-T_1)\sqrt{\gamma(s)}     \right) ds \\
&\leq&  \frac 1 {(T_2-T_1)^2}   \int_{T_1}^{T_2} \frac{C_2^2}{ C_1} ds  \\
&=&     \frac {C_2^2}{C_1(T_2-T_1)}.
\end{eqnarray*}

In the last estimate, we used $-C_1 x^2 + 2C_2 x \leq \frac{C_2^2}{ C_1}$ with $x = (s-T_1)\sqrt{\gamma(s)}$.
\end{proof}

   \subsection{Main theorem}

The following theorem is in the spirit of Bauer, Horn, Lin, Lippner, Mangoubi, and Yau (cf. \cite[Theorem 5.1]{Bauer2013}).
In contrast to their version and in order to focus on our new methods, we do not consider the effect of potentials but instead, we consider more general gradient forms.

To prove the Harnack inequality, we require the gradient estimate 
\[
D_1 \Gamma^\psi (u) (x,t) - \partial_t \log u (x,t) \leq \frac{D_2}t + D_3 
\]
for all positive solutions $\sol$ to the heat equation and some positive constants $D_1,D_2,D_3 \in \posR$.
By using the gradient representation (Lemma \ref{LGR})
\[
-\Delta^\psi f = \Gamma^\psi (f) - \psi'(1) \frac {\Delta f} f,
\]
we will be able to guarantee the required gradient estimate due to the $\psi$-Li-Yau inequality (Theorem \ref{TPLY})
\[
-\Delta^\psi u(x,t) \leq \frac {d} {2 t}
\]
for graphs satisfying the $CD\psi(d,0)$ condition.

\begin{theorem}[Harnack inequality as a consequence of a $\psi$-gradient estimate] \label{THarnack} 
Let G=(V,E) be a finite and connected graph, $D_1,D_2,D_3 \in \posR$ positive constants, and let $\sol$ be a function satisfying
\begin{equation}
D_1 \Gamma^\psi (u) (x,t) - \partial_t \log u (x,t) \leq \frac{D_2}t + D_3  \label{Hreq} 
\end{equation}
for all $x\in V$ and $t \in \nnegR$. 
Then, 
\[
\frac{u(x_1,T_1)}{u(x_2,T_2)} \leq  \left( \frac{T_2}{T_1} \right)^{D_2} \exp(D_3(T_2-T_1)) \exp\left(\frac {H_\psi d(x_1,x_2)}{ D_1 (T_2-T_1)}\right)
\]
holds for all $x_1,x_2 \in V$ and all positive $T_1<T_2$.
\end{theorem}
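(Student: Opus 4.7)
The plan is to telescope $\log u$ along a path in space-time connecting $(x_1,T_1)$ to $(x_2,T_2)$, and then to combine the spatial gradient estimate from the lemma (\ref{EG}) with the hypothesis on $\partial_t \log u$. First, I would fix a geodesic $x_1=v_0 \sim v_1 \sim \cdots \sim v_n=x_2$ in $G$ with $n=d(x_1,x_2)$, and pick intermediate times $T_1=\tau_0 \leq \tau_1 \leq \cdots \leq \tau_n \leq \tau_{n+1}=T_2$ to be optimized later. Telescoping gives
\[
\log\frac{u(x_1,T_1)}{u(x_2,T_2)} = \sum_{i=0}^{n}\bigl[\log u(v_i,\tau_i) - \log u(v_i,\tau_{i+1})\bigr] + \sum_{i=1}^{n}\bigl[\log u(v_{i-1},\tau_i) - \log u(v_i,\tau_i)\bigr],
\]
where the first sum collects purely temporal increments at fixed vertices and the second collects purely spatial increments at fixed times.

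Setting $\gamma_i(t):=\Gamma^\psi(u)(v_i,t)$, I would bound each spatial increment by $\sqrt{H_\psi}\sqrt{\gamma_i(\tau_i)}$ via inequality (\ref{EG}) applied at $v_i$ with neighbor $v_{i-1}$, and bound each temporal increment by writing it as $-\int_{\tau_i}^{\tau_{i+1}}\partial_t \log u(v_i,t)\,dt$ and applying the hypothesis in the rearranged form $-\partial_t \log u \leq D_2/t + D_3 - D_1\Gamma^\psi(u)$. The $D_2$ and $D_3$ contributions telescope exactly to $D_2\log(T_2/T_1)$ and $D_3(T_2-T_1)$; after discarding the non-positive $-D_1\int_{T_1}^{\tau_1}\gamma_0(t)\,dt$ term, the estimate reduces to
\[
\log\frac{u(x_1,T_1)}{u(x_2,T_2)} \leq D_2\log\frac{T_2}{T_1} + D_3(T_2-T_1) + \sum_{i=1}^{n}\Bigl[\sqrt{H_\psi}\sqrt{\gamma_i(\tau_i)} - D_1\int_{\tau_i}^{\tau_{i+1}}\gamma_i(t)\,dt\Bigr].
\]

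The crux of the proof is then the choice of the $\tau_i$ so that each bracketed term is controlled of order $H_\psi/(D_1(T_2-T_1))$. This is precisely the expression handled by the Minimal Integral Lemma with $C_2=\sqrt{H_\psi}$, $C_1=D_1$, and $\gamma=\gamma_i$: its conclusion furnishes a time at which the pointwise $\sqrt{H_\psi}\sqrt{\gamma_i}$ is absorbed by the time integral of $D_1\gamma_i$ up to a remainder proportional to $H_\psi/(D_1 \cdot \text{interval length})$. Applying this lemma once per edge on a suitably chosen sub-interval and summing the $n$ resulting edge contributions yields a total contribution of $H_\psi d(x_1,x_2)/(D_1(T_2-T_1))$, and exponentiating produces the claimed Harnack inequality.

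The principal technical obstacle is that the ordering constraint $\tau_1\leq\cdots\leq\tau_n$ is genuinely needed — otherwise the time derivatives run backward and the hypothesis ceases to give an upper bound on the temporal increments — while the minimizers produced by the Minimal Integral Lemma for the distinct $\gamma_i$ on the full interval $[T_1,T_2]$ come with no ordering guarantee. Resolving this tension, presumably by iterating the Minimal Integral Lemma from $i=n$ downwards on successively adjusted right-tails $[T_1,\tau_{i+1}]$ and carefully bookkeeping the interval lengths, is the delicate step that ultimately secures the exponent $H_\psi d(x_1,x_2)/(D_1(T_2-T_1))$ in the claimed inequality.
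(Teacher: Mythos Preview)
Your overall strategy---telescoping along a geodesic, bounding spatial jumps by (\ref{EG}), bounding temporal increments via the hypothesis, and invoking Lemma~\ref{MIL}---is exactly the paper's, and you correctly isolate the real obstacle: the $\tau_i$ must be ordered, while the minimizers furnished by the lemma for different $\gamma_i$ carry no ordering guarantee.

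Your proposed resolution, however, does not close the gap. Iterating Lemma~\ref{MIL} from $i=n$ downward on the right-tails $[T_1,\tau_{i+1}]$ yields at step $i$ a remainder $H_\psi/\bigl(D_1(\tau_{i+1}-T_1)\bigr)$, but $\tau_{i+1}$ is the minimizer produced at the previous step and nothing prevents it from lying arbitrarily close to $T_1$ (e.g.\ if $\gamma_{i+1}$ happens to vanish near $T_1$), so the successive remainders can blow up. The coupling of the $i$-th bracket to $\tau_{i+1}$ cannot be undone by bookkeeping alone; the lemma only bounds an infimum, it does not locate the minimizer in any prescribed sub-interval.

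The paper decouples the problem differently. It first proves the adjacent case $x_1\sim x_2$ on an arbitrary interval $[T_1,T_2]$ by telescoping $(x_1,T_1)\to(x_1,s)\to(x_2,s)\to(x_2,T_2)$ with a \emph{single} free $s\in[T_1,T_2]$ and applying Lemma~\ref{MIL} once, giving the edge bound $H_\psi/\bigl(D_1(T_2-T_1)\bigr)$. For a path of length $n$ it then \emph{fixes in advance} the equal partition $t_i=T_1+i(T_2-T_1)/n$ and applies the adjacent-case bound on each $[t_{i-1},t_i]$. The free parameters now live in disjoint fixed sub-intervals, so ordering is automatic and the estimates decouple. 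Summing the $n$ edge contributions $H_\psi n/\bigl(D_1(T_2-T_1)\bigr)$ gives $H_\psi\, d(x_1,x_2)^2/\bigl(D_1(T_2-T_1)\bigr)$. Note that the theorem statement has a typo: the exponent should read $d(x_1,x_2)^2$, as both the paper's proof and the subsequent corollaries confirm. This may have misled you into aiming for a linear-in-$n$ bound, which would indeed require each edge to see the full interval and hence force the problematic coupled optimization.
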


The proof of this theorem is very similar to the one given in \cite[Theorem 5.1]{Bauer2013}.

\begin{proof}
First, we consider the case $x_1 \sim x_2$. Let $T_1 < T_2$ and $s \in \left[ T_1,T_2 \right]$.
We use the assumption (\ref{Hreq}) of the theorem and the estimate (\ref{EG}) of $\Gamma^\psi$
to estimate
\begin{eqnarray*}
		  \log \frac {u(x_1,T_1)} {u(x_2,T_2)} 
&=&        \log \frac {u(x_1,T_1)} {u(x_1,s)} + \log \frac {u(x_1,s)} {u(x_2,s)} + \log \frac {u(x_2,s)} {u(x_2,T_2)} \\
&=&        \int_{T_1}^s -\partial_t \log u(x_1,t) dt + \log \frac {u(x_1,s)} {u(x_2,s)}  -\int_{s}^{T_2} \partial_t \log u(x_2,t) dt \\
&\stackrel{(\ref{Hreq})}{\leq}& 
				   \int_{T_1}^s \left( \frac {D_2}t + D_3 - D_1 \Gamma^\psi (u)(x_1,t) \right) dt + \log \frac {u(x_1,s)} {u(x_2,s)}  
				\\&& + \int_{s}^{T_2} \left( \frac {D_2}t + D_3 - D_1 \Gamma^\psi (u)(x_2,t) \right) dt \\
&\stackrel{D_1 \Gamma^\psi \geq 0}{\leq}& 
			 	\int_{T_1}^{T_2} \left( \frac {D_2}t + D_3 \right) dt +   \log \frac {u(x_1,s)} {u(x_2,s)} - \int_{s}^{T_2} D_1  \Gamma^\psi  (u)(x_2,t) dt \\
&\stackrel{(\ref{EG})}{\leq}&	
				D_2\log\ \frac {T_2}{T_1} + D_3(T_2-T_1) \\&& +    \sqrt{H_\psi}\sqrt{\Gamma^\psi (u)(x_2,s)}   - \int_{s}^{T_2} D_1  \Gamma^\psi (u)(x_2,t) dt. \\		 	
\end{eqnarray*}

Now, we take the infimum over all $s \in \left[ T_1,T_2 \right]$ and, by using the minimal integral estimate (\ref{HL}), we obtain

\begin{eqnarray*}
		  \log \frac {u(x_1,T_1)} {u(x_2,T_2)} 
&\leq& 
				D_2\log\ \frac {T_2}{T_1} + D_3(T_2-T_1) \\&& +  \inf_{s\in[T_1,T_2]} \left(  \sqrt{H_\psi}\sqrt{\Gamma^\psi (u)(x_2,s)}   - \int_{s}^{T_2} D_1  \Gamma^\psi (u)(x_2,t) dt \right) \\		 	
&\stackrel{(\ref{HL})}{\leq}&	
				D_2\log\ \frac {T_2}{T_1} + D_3(T_2-T_1) + \frac {H_\psi} {D_1(T_2-T_1)}. \\		 	
\end{eqnarray*}
The minimal integral estimate (\ref{HL}) was applied with  $\gamma(t) = \log u(x_2,t)$, and $C_2 = \sqrt{H_\psi}$, and $C_1 = D_1$.

Now, we consider the general case (i.e. the vertices $x_1$ and $x_2$ are not necessarily adjacent). Denote $\dd:= d(x_1,x_2)$. Since $G$ is connected, there is a path $x_1=v_0 \sim \ldots \sim v_{\dd} = x_2$ of length $\dd$, and there are positive numbers $T_1 = t_0 < \ldots < t_{\dd} = T_2$ with $t_i - t_{i-1} = \frac {T_2 - T_1}{\dd}$ for $i\in \{1,\ldots,\dd\}$.
Using the estimation from above yields
\begin{eqnarray*}
		  \log \frac {u(x_1,T_1)} {u(x_2,T_2)} 
&=& \sum_{i=1}^{\dd}  \log \frac {u(v_{i-1},t_{i-1})} {u(v_i,t_i)} \\
&\leq& \sum_{i=1}^{\dd}  D_2\log\ \frac {t_i}{t_{i-1}} + D_3(t_i-t_{i-1}) + \frac {H_\psi} {D_1(t_i-t_{i-1})} \\
&=&  D_2 \log \frac {T_2}{T_1} +  D_3(T_2-T_1) + \frac {H_\psi d(x_1,x_2)^2} {D_1(T_2-T_1)}. \\		
\end{eqnarray*}
Hence,
\[
\frac{u(x_1,T_1)}{u(x_2,T_2)} \leq  \left( \frac{T_2}{T_1} \right)^{D_2} \exp(D_3(T_2-T_1)) \exp\left(\frac {H_\psi d(x_1,x_2)}{ D_1 (T_2-T_1)}\right)
\]
which is the claim of the theorem.
\end{proof}

\begin{corollary}[Harnack inequalities as a consequence of the $CD\psi$ condition] 
Let $\psi \in C^1(\posR)$ be a concave function with $\psi'(1)=1$, and let $G=(V,E)$ be a graph satisfying the $CD\psi(d,0)$ inequality for some $d \in \posR$. Then for all positive solutions $\sol$ to the heat equation on $G$, all $x_1,x_2 \in V$, and all positive $T_1<T_2$, one has
\[
\log \frac{u(x_1,T_1)}{u(x_2,T_2)} \leq \frac d {2} \log \frac{T_2}{T_1} + \frac{H_\psi d(x_1,x_2)^2}{(T_2-T_1)}.
\]

\end{corollary}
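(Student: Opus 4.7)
The plan is to obtain the required gradient estimate of the form (\ref{Hreq}) from the $\psi$-Li-Yau inequality and then invoke Theorem \ref{THarnack} directly. The assumption $\psi'(1)=1$ is exactly what is needed to convert the (non-linear) $\psi$-Li-Yau inequality into an additive gradient estimate that matches the hypothesis of Theorem \ref{THarnack}.

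First I would rewrite $-\Delta^\psi u$ using the gradient representation from Lemma \ref{LGR}: since $\psi$ is concave, $\psi'(1)=1$, and $u(\cdot,t) \in C^+(V)$ for each $t$, we have
\[
-\Delta^\psi u = \Gamma^\psi(u) - \frac{\Delta u}{u}.
\]
Because $u$ solves the heat equation $\Delta u = \partial_t u$, the rightmost term equals $\partial_t u / u = \partial_t \log u$, so pointwise on $V \times \posR$
\[
-\Delta^\psi u = \Gamma^\psi(u) - \partial_t \log u.
\]
Next, I would apply the $\psi$-Li-Yau inequality (Theorem \ref{TPLY}), which is available since $G$ satisfies $CD\psi(d,0)$, to bound the left-hand side by $d/(2t)$. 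This yields
\[
\Gamma^\psi(u)(x,t) - \partial_t \log u(x,t) \;\leq\; \frac{d}{2t}
\]
for all $x \in V$ and $t>0$, which is precisely the hypothesis (\ref{Hreq}) of Theorem \ref{THarnack} with constants $D_1 = 1$, $D_2 = d/2$, and $D_3 = 0$ (the last being handled by simply taking any positive $D_3$ and letting it tend to $0$, or by inspecting the proof of Theorem \ref{THarnack}, where the steps involving $D_3$ remain valid for $D_3 = 0$).

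Now I would apply Theorem \ref{THarnack} with these constants, obtaining
\[
\frac{u(x_1,T_1)}{u(x_2,T_2)} \;\leq\; \left(\frac{T_2}{T_1}\right)^{d/2} \exp\!\left(\frac{H_\psi\, d(x_1,x_2)^2}{T_2 - T_1}\right),
\]
and the result follows by taking logarithms. There is essentially no obstacle here; the only thing to be careful about is that Theorem \ref{THarnack} as stated requires positive $D_1,D_2,D_3$, but the $D_3 = 0$ case is immediate either by continuity in $D_3$ or by direct inspection of its proof (the term $D_3(T_2 - T_1)$ simply vanishes). The conceptual content is entirely in Theorem \ref{TPLY} and Theorem \ref{THarnack}; this corollary is the bookkeeping that fuses the two with the correct choice of $D_1,D_2,D_3$ made possible by the normalization $\psi'(1) = 1$.
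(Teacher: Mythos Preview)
Your proof is correct and follows essentially the same route as the paper: invoke the $\psi$-Li-Yau inequality (Theorem \ref{TPLY}), rewrite $-\Delta^\psi u$ via the gradient representation (Lemma \ref{LGR}) using $\psi'(1)=1$ and the heat equation, and then apply Theorem \ref{THarnack} with $D_1=1$, $D_2=d/2$, $D_3=0$. Your extra care about the $D_3=0$ case (Theorem \ref{THarnack} is stated for $D_3 \in \posR$) is a valid observation that the paper glosses over.
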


\begin{proof}
Since $G$ satisfies the $CD\psi(d,0)$ inequality, the $\psi$-Li-Yau inequality holds. Thus,
\[
\Gamma^\psi (u) - \frac {\Delta u} u = -\Delta^\psi u \leq \frac d {2t}.
\]
Hence, we can apply the Harnack inequality with $D_1=1$, $D_2 = \frac d {2}$, and $D_3 = 0$, and we obtain the claim.
\end{proof}

\section{Examples} \label{CExamples}

To show that the Harnack inequality presented here is stronger than the one given in \cite[Corollary 5.2]{Bauer2013}, we have to assure a unified context for both. Unfortunately, the different curvature-dimension conditions seem to be not unifiable. Instead, so called Ricci-flat graphs will turn out to be an appropriate basis to compare the Harnack inequalities.
To do this comparison, we need some preliminary considerations.
In the following subsection, we will show that Ricci-flat graphs satisfy the $CD\psi(d,0)$ condition for suitable $\psi$. For a given Ricci-flat graph, the dimension bound $d$ depends on the parameter $\psi$. This dependence can be described by a constant $C_\psi$.
In the second subsection of this section, we compute this constant $C_\psi$ and the Harnack constant $H_\psi$ (cf. Definition \ref{DHC}) for several $\psi$.
We use these computations to discuss the announced break of analogy between the $CDE$ condition introduced in \cite[Definition 3.9]{Bauer2013} and the $CD$ condition (cf. Definition \ref{DCD}).
In the third subsection, we establish a Harnack inequality on Ricci-flat graphs. Finally, we will compare this inequality with the one given in \cite[Corollary 5.2]{Bauer2013}. 

  \subsection{Ricci-flat graphs}\label{SRF} 

Ricci-flat graphs were introduced by Chung and Yau \cite{Chung1996} as a generalization of Abelian Cayley graphs to prove Harnack inequalities and log-Sobolev inequalities.
These graphs have been the basis to establish new notions of Ricci curvature on graphs (cf. \cite{Bauer2013, Lin2010}).

The goal of this subsection is to prove that Ricci-flat graphs satisfy the $CD \psi$ inequality with curvature bound zero. This subsection is in the spirit of \cite[Subsection 6.3]{Bauer2013}, where the $CDE$ condition is proved for Ricci-flat graphs.

\begin{defn}[Ricci-flat graphs] 
Let $D \in \IN$. A finite graph $G = (V,E)$ is called $D$\emph{-Ricci-flat} in $v \in V$ if all $w \in N(v):=\{v\} \cup \{w \in V: w \sim v\}$ have the degree $D$, and if there are maps $\eta_1,\ldots,\eta_D : N(v) \to V $, such that for all $w \in N(v)$ and all $i, j \in \{1,\ldots,D\}$ with $i \neq j$, one has
\begin{eqnarray}
\eta_i(w) &\sim& w,  \label{r1} \\
\eta_i(w) &\neq& \eta_j(w), \label{r2}\\
\bigcup_k \eta_k(\eta_i(v)) &=& \bigcup_k \eta_i(\eta_k(v)) \label{r3}.
\end{eqnarray}
The graph $G$ is called $D$\emph{-Ricci-flat} if it is $D$-Ricci-flat in all $v \in V$.

\end{defn}

\begin{example}
All Abelian Cayley graphs with degree $D$ are $D$-Ricci-flat as mentioned already in \cite{Chung1996}.
\end{example}

In the next lemma, we collect some facts which are already used in \cite{Bauer2013}.

\begin{lemma} [Basic properties of Ricci-flat graphs] \label{Lj} 
Let $G = (V,E)$ be Ricci-flat in $v \in V$ with according maps $\eta_1,\ldots,\eta_D : N(v) \to V $.
  \eBr
    \item
      Let $f \in C(V)$ be a function. Then for all $i \in \{1,\ldots,D\}$, one has
      \begin{equation}
        \sum_k f(\eta_k \eta_i(v)) = \sum_k f(\eta_i \eta_k(v)). \label{komm}
      \end{equation}
  
	  \item 
       For all $i \in \{1,\ldots,D\}$, there is a unique $j = \ii$, such that 
       $\eta_{i} (\eta_j(v)) = v$.
       Additionally, the map $i \mapsto \ii$ is a permutation of $\{1,\ldots,D\}$.

  \end{enumerate}
\end{lemma}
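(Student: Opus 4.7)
The plan is to deduce both parts of the lemma from the axioms (\ref{r1})--(\ref{r3}) together with the degree condition, exploiting the fact that each vertex in $N(v)$ has exactly $D$ neighbors and that $\eta_1, \ldots, \eta_D$ enumerate a full set of neighbors without repetition.

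First I would prove a preliminary distinctness statement: for each fixed $i \in \{1,\ldots,D\}$, both collections $\{\eta_k(\eta_i(v))\}_{k=1}^D$ and $\{\eta_i(\eta_k(v))\}_{k=1}^D$ are sets of exactly $D$ elements. For the left collection this is immediate: by (\ref{r1}) each $\eta_k(\eta_i(v))$ is a neighbor of $\eta_i(v)$, and by (\ref{r2}) applied at $w=\eta_i(v)\in N(\eta_i(v))$ the $D$ values are pairwise distinct; since $\eta_i(v)$ has degree $D$ (it lies in $N(v)$), these $D$ elements in fact exhaust the neighborhood of $\eta_i(v)$. For the right collection, since (\ref{r3}) asserts set equality with the left collection, its underlying set also has cardinality $D$, so the map $k \mapsto \eta_i(\eta_k(v))$ is injective.

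With this in hand, claim (1) follows at once: the set equality (\ref{r3}) lifts to an equality of $D$-element multisets, so $\sum_k f(\eta_k\eta_i(v)) = \sum_k f(\eta_i\eta_k(v))$ for every $f \in C(V)$. For claim (2), existence of $j$: by (\ref{r1}) we have $v \sim \eta_i(v)$, so $v$ is one of the $D$ neighbors of $\eta_i(v)$, hence $v \in \bigcup_k \eta_k(\eta_i(v))$; by (\ref{r3}) therefore $v \in \bigcup_k \eta_i(\eta_k(v))$, so $v = \eta_i(\eta_j(v))$ for some $j$. Uniqueness of $j$ is immediate from the injectivity of $k \mapsto \eta_i(\eta_k(v))$ proved above. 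To see that $i \mapsto j(i)$ is a permutation of $\{1,\ldots,D\}$, it suffices to verify injectivity; but if $j(i)=j(i')=j$, then $\eta_i(\eta_j(v))=v=\eta_{i'}(\eta_j(v))$, and (\ref{r2}) applied at the vertex $\eta_j(v)$ (which lies in $N(\eta_j(v))$) forces $i=i'$.

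The main (minor) obstacle is the distinction between equality of sets, which is how (\ref{r3}) is stated, and equality of multisets, which is what (\ref{komm}) really asks for. The preliminary observation that both sides of (\ref{r3}) are honest $D$-element sets—obtained by combining the direct cardinality count on the left with a cardinality transfer through (\ref{r3}) on the right—bridges this gap and also furnishes the injectivity used throughout part (2).
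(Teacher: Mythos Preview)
Your argument is correct and essentially matches the paper's: both show (\ref{komm}) by establishing that each side of (\ref{r3}) consists of $D$ distinct vertices (the left directly from (\ref{r2}), the right by cardinality transfer through (\ref{r3})), and both deduce the permutation property from (\ref{r2}) applied at $w=\eta_j(v)$; your version additionally spells out the existence of $j(i)$, which the paper leaves implicit. One small correction: when you invoke (\ref{r2}) at $w=\eta_i(v)$ and later at $w=\eta_j(v)$, the relevant hypothesis is that $w\in N(v)$ (which holds since $w\sim v$ by (\ref{r1})), not $w\in N(w)$ as you wrote.
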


\begin{proof}

First, we prove \emph{(1)}.

Since $\bigcup_k \eta_k(\eta_i(v)) = \bigcup_k \eta_i(\eta_k(v))$, it is sufficient to show that no vertex in (\ref{komm}) is summed up twice.
This is clear if $\bigcup_k \eta_k(\eta_i(v))$ and if $\bigcup_k \eta_i(\eta_k(v))$ are disjoint unions.
We know, $\bigcup_k \eta_k(\eta_i(v))$ is a disjoint union by (\ref{r2}). Since $D < \infty$, identity (\ref{r3}) implies that $\bigcup_k \eta_i(\eta_k(v))$ is a disjoint union.

Next, we prove \emph{(2)}.

The uniqueness of $\ii$ is obvious. 

Suppose $i \mapsto \ii$ is not a permutation.
Then, there are $i,k \in \{1,\ldots,D\}$ with $i \neq k$ and $j = \ii = j(k)$. This means
$\eta_i\eta_j(v) = \eta_k\eta_j(v)$. Hence,
\[
\# \bigcup_l \eta_l \eta_j (v) < D.
\]
This is a contradiction.
\end{proof}

To prove a $CD\psi$ inequality on Ricci-flat graphs, we need to introduce the constant $C_\psi$.

\begin{defn}\label{DCP} 
Let $\psi \in C^1(\posR)$. Then for all $x,y > 0$, we write
\[
\widetilde{\psi}(x,y):= \left[\psi'(x) + \psi'(y) \right](1-xy) + x [\psi(y) - \psi(1/x)] + y [\psi(x) - \psi(1/y)]
\]
and
\[
C_\psi := \inf_{x,y>0} \frac{\widetilde{\psi}(x,y)}{(\psi(x) + \psi(y) - 2\psi(1))^2} \in [-\infty, \infty].
\]

\end{defn}

To obtain useful results, we need $C_\psi>0$. We will discuss this condition in the next subsection.

\begin{rem}
For $\psi=\log$, it suffices to consider the infimum of the function in $C_\psi$ for the diagonal, i.e., $x=y$. This behavior is also indicated by numerical computations for various concave $\psi$ and it would be interesting to know whether this behavior can be established for arbitrary concave $\psi$.
\end{rem}

\begin{theorem}[$CD\psi$ for Ricci-flat graphs] \label{TRicci} 
Let $D \in \IN$, let $G=(V,E)$ be a $D$-Ricci-flat graph, and let $\psi \in C^1(\posR)$ be a concave function, such that $C_\psi>0$. Then, $G$ satisfies the $CD \psi (d,0)$ inequality with $d = D / C_\psi$.
\end{theorem}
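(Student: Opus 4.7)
The plan is to verify $CD\psi(D/C_\psi,0)$ pointwise at an arbitrary vertex $v$ by expanding $\Gamma_2^\psi(f)(v)$ entirely in terms of the local Ricci-flat combinatorics. First, assume $\psi(1)=0$ without loss of generality, since by Lemma \ref{Lcalc} and the definitions this changes neither the $\psi$-operators nor $\widetilde{\psi}$, hence nor $C_\psi$. Fix Ricci-flat maps $\eta_1,\ldots,\eta_D$ at $v$ and set $x_i := f(\eta_i(v))/f(v)$ and $y_{ki} := f(\eta_k\eta_i(v))/f(\eta_i(v))$. By Lemma \ref{LRL}, $\Delta^\psi f(v) = \sum_i \psi(x_i)$ and $\Delta^\psi f(\eta_i(v)) = \sum_k \psi(y_{ki})$, and each of the three pieces of $2\Gamma_2^\psi(f)(v) = \Omega^\psi f(v) + \tfrac{\Delta f \cdot \Delta^\psi f}{f}(v) - \tfrac{\Delta(f\Delta^\psi f)}{f}(v)$ expands to an explicit double sum in the $x_i$ and $y_{ki}$ (using also that the inner function in $\Omega^\psi f(v)$ vanishes at $v$).

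The heart of the proof is a combinatorial rearrangement of this double sum into the form $c_D \sum_{i,k}\widetilde{\psi}(x_i, x_k)$, with $c_D$ a factor depending only on $D$. This uses the two Ricci-flat identities of Lemma \ref{Lj}: property \eqref{komm} lets me swap $\sum_k h(\eta_k\eta_i(v)) = \sum_k h(\eta_i\eta_k(v))$ wherever convenient, and property (2) provides the permutation $i \mapsto j(i)$ with $\eta_{j(i)}\eta_i(v) = v$, which is precisely what produces the reciprocal ratios $1/x_i$ appearing inside $\widetilde{\psi}(x,y) = (\psi'(x)+\psi'(y))(1-xy) + x[\psi(y)-\psi(1/x)] + y[\psi(x)-\psi(1/y)]$. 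I expect this matching to be the \emph{main obstacle}: one must simultaneously track the commutation law, the return permutation $j(\cdot)$, and the concavity of $\psi$ (which ensures that spurious cross-terms have the right sign) to align the raw expansion with the four summands of $\widetilde{\psi}$.

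Once the identity $2\Gamma_2^\psi(f)(v) = c_D \sum_{i,k}\widetilde\psi(x_i,x_k)$ is established, the rest is routine. By the definition of $C_\psi$ together with $\psi(1)=0$, one has $\widetilde{\psi}(x_i,x_k) \geq C_\psi(\psi(x_i)+\psi(x_k))^2$ pointwise, and a direct expansion combined with Cauchy--Schwarz gives
\[
\sum_{i,k}(\psi(x_i)+\psi(x_k))^2 \;=\; 2D\sum_i \psi(x_i)^2 + 2\Big(\sum_i \psi(x_i)\Big)^2 \;\geq\; 4\big(\Delta^\psi f(v)\big)^2.
\]
Matching constants then yields $\Gamma_2^\psi(f)(v) \geq \tfrac{C_\psi}{D}(\Delta^\psi f(v))^2$, which is exactly $CD\psi(D/C_\psi, 0)$. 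Concavity of $\psi$ enters both through the non-negativity of $\overline{\psi}$ used throughout the $\Gamma^\psi$-calculus and through ensuring that $C_\psi$ is a positive finite constant rather than a degenerate infimum.
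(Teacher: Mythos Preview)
Your setup and the intuition about the return permutation are on the right track, but the central step --- the claimed \emph{identity} $2\Gamma_2^\psi(f)(v)=c_D\sum_{i,k}\widetilde\psi(x_i,x_k)$ --- is false and cannot be salvaged as an equality. The left-hand side genuinely depends on the second-neighbor ratios $y_{ki}=f(\eta_k\eta_i(v))/f(\eta_i(v))$ for \emph{all} pairs $(i,k)$, while your proposed right-hand side depends only on the first-neighbor ratios $x_i$; no combinatorial reshuffling can erase that dependence as an identity. Moreover, the return permutation only produces the reciprocal $1/x_{j(i)}$ paired with $x_i$ (via $\eta_i(\eta_{j(i)}(v))=v$, not $\eta_{j(i)}(\eta_i(v))=v$ as you wrote), so there is no mechanism that generates $\widetilde\psi(x_i,x_k)$ for arbitrary pairs $(i,k)$.

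What actually works is an \emph{inequality}, obtained in two steps you are missing. First, the exact expansion is $2\Gamma_2^\psi(f)(v)=\sum_{i,j} z_j\big(\psi'(z_i)(z_{ji}-z_i)-[\psi(z_{ji})-\psi(z_i)]\big)$, and concavity of $\psi$ makes every summand non-negative; one then \emph{drops} all terms except the $D$ return terms $j=j(i)$, for which $z_{j(i),i}=1/z_{j(i)}$. Second, since $i\mapsto j(i)$ need not be an involution, one cannot symmetrize directly; the paper invokes the rearrangement inequality (using that $z\mapsto\psi(z)-z\psi'(z)$ is non-decreasing by concavity) to replace $z_{j(i)}$ by $z_{i'}$ with $i'=D+1-i$, which \emph{is} an involution, and only then symmetrizes to obtain $2\Gamma_2^\psi(f)(v)\geq\tfrac12\sum_i\widetilde\psi(z_i,z_{i'})$. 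The endgame via $C_\psi$ and Cauchy--Schwarz is as you describe, but applied to this single sum of $D$ terms rather than your double sum.
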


The proof and the notation are inspired by the proof of Theorem 6.7 in \cite{Bauer2013}.

\begin{proof}

We have to show for all $f \in C(V)$ and all $v \in V$ that
\[
2 \Gamma_2^\psi (f) (v) \geq \frac {2 C_\psi}{D} \left[\Delta^\psi f (v)\right]^2.
\]	

First, we recall the definitions
\begin{eqnarray*}
\Delta^\psi f(v)          &=&  \sum_{w\sim v} \psi\left( \frac{f(w)}{f(v)} \right) - \psi (1),\\
\Omega^\psi f(v) &=&  \sum_{w\sim v} \psi'\left( \frac{f(w)}{f(v)} \right) \cdot \frac{f(w)}{f(v)} \cdot \left[ \frac{\Delta f(w)}{f(w)} -   \frac{\Delta f(v)}{f(v)} \right],  \\
2 \Gamma_2^\psi (f) 			&=& \Omega^\psi f + \frac {\Delta f \Delta^\psi f} f - \frac {\Delta \left(f \Delta^\psi f\right)} f.
\end{eqnarray*}	

We can assume $$\psi(1)=0$$ without loss of generality since $\Gamma_2^\psi$, $\Delta^\psi$ and $C_\psi$ are invariant under adding constants to $\psi$.
Let $v \in V$ and $f \in C(V)$. Since $G$ is Ricci-flat, there are maps $\eta_1,\ldots,\eta_D : N(v) \to V$ as demanded in the definition. For all $i,j \in \{1,\ldots,D\}$, we denote
\begin{eqnarray*}
y 									&:=& f(v),\\
y_i  								&:=& f(\eta_i(v)),\\
y_{ij} 							&:=& f(\eta_j(\eta_i(v))),\\
z_i 						 		&:=& y_i / y, \\
z_{ij} 							&:=& y_{ij}/y_{i}.
\end{eqnarray*}

We use the representation of $\Delta^\psi$ (Lemma \ref{LRL}) to obtain the following two identities
\begin{eqnarray*}
\Delta^\psi f (v) 				&=& \sum_i \psi(z_i), \\
\Delta^\psi f (\eta_i(v)) &=& \sum_j \psi(z_{ij}). \\
\end{eqnarray*}


Thus, we can compute
\begin{eqnarray*}
&&\frac {\Delta \left(f \Delta^\psi f\right)} {f} (v) - \frac {\left(\Delta f \right) \Delta^\psi f} {f} (v)\\
&=&   \frac {\sum_{w \sim v}- f(v) \Delta^\psi f(v) + f(w) \Delta^\psi f(w) } {f (v)} - \frac {\left(\sum_{w\sim v} - f(v) + f(w)  \right) \sum_i \psi(z_i)} {f(v)}                     \\
&=& \left[ \sum_{i,j}  z_j \psi(z_{ji}) - \psi(z_i) \right] -  \left[ \sum_{i,j} \left( z_j - 1 \right) \psi (z_i)  \right]\\
&=& \sum_{i,j} z_j [\psi(z_{ji}) - \psi(z_i)]
\end{eqnarray*}
and
\begin{eqnarray*}
\Omega^\psi  f (v) &=& \sum_i \psi'(z_i)z_i \left[   \frac {(\Delta f)(\eta_i(v))}{y_i} -  \frac {(\Delta f)(v)}{y}           \right] \\
&=& 																\sum_{i,j} \psi'(z_i)z_i (z_{ij}-z_j) \\
&\stackrel{(\ref{komm})}{=}& 				\sum_{i,j} \psi'(z_i)z_j (z_{ji}-z_i).
\end{eqnarray*}
 
We summarize
\begin{equation}
  2 \Gamma_2^\psi (f) (v) = \sum_{i,j} z_j \left( \psi'(z_i)(z_{ji}-z_i) - [\psi(z_{ji}) - \psi(z_i)] \right) .\label{GRF}
\end{equation}

Since $\psi$ is concave, every summand is positive. As we showed in the second claim of Lemma \ref{Lj}, for each $i$, there is a unique $j=\ii$ with $\eta_i(\eta_j(v)) = v$.
Now, we disregard all other summands of (\ref{GRF}) and use $z_ {ji} = 1/z_{\ii}$ if $j=\ii$ to estimate

\begin{eqnarray*}
2 \Gamma_2^\psi (f) (v)&\geq& \sum_i z_{\ii}\left( \psi'(z_i)\left( \frac 1 {z_{\ii}} - z_i \right) -\left[ \psi\left( \frac 1 {z_{\ii}} \right) - \psi(z_i) \right]    \right) \nonumber  \\
&=& \sum_i \psi'(z_i) - \sum_i   z_{\ii} \psi\left( \frac 1 {z_{\ii}} \right) + \sum_i  z_{\ii}\left( \psi(z_i) - z_i \psi'(z_i) \right). 
\end{eqnarray*}

The next step is to symmetrize the sum. Unfortunately, we do not have $j(\ii)=i$ in general. But instead, we can use the rearrangement inequality. 
This states that for all permutations $\sigma$ on $\{1,\ldots,D\}$ and all $a_1 \leq \ldots \leq a_D$ and all $b_1 \leq \ldots \leq b_D$, one has
\[
\sum_{i=1}^D a_{\sigma(i)} b_i \geq \sum_{i=1}^D a_{D+1-i} b_i.
\]
Since $\psi$ is concave, we observe that the map $z \mapsto \psi(z) - z \psi'(z)$ is monotonically non-decreasing. Without loss of generality, we have $0 < z_1 \leq \ldots \leq z_D$. Furthermore, by the second claim of Lemma \ref{Lj}, the map $i \mapsto j(i)$ is a permutation. Thus, we can apply the rearrangement inequality to obtain
\[
\sum_i  z_{\ii}\left( \psi(z_i) - z_i \psi'(z_i) \right) \geq \sum_i  z_{i'}\left( \psi(z_i) - z_i \psi'(z_i) \right)
\]
with $i':=D+1-i$. Especially, we have $i''=i$. Furthermore, the map $i \mapsto i'$ is a permutation.
Hence,
\begin{eqnarray*}
2 \Gamma_2^\psi (f) (v)  &\geq& \sum_i \psi'(z_i) - \sum_i   z_{\ii} \psi\left( \frac 1 {z_{\ii}} \right) + \sum_i  z_{\ii}\left( \psi(z_i) - z_i \psi'(z_i) \right)\\
&\geq&  \sum_i \psi'(z_i) - \sum_i   \zi \psi\left( \frac 1 {\zi} \right) + \sum_i  \zi\left( \psi(z_i) - z_i \psi'(z_i) \right)\\
&=&    \frac 1 2 \left( \sum_i + \sum_{i'}\right)\left[\psi'(z_i) - \zi \psi\left( \frac 1 {\zi}\right) +  \zi\left( \psi(z_i) - z_i \psi'(z_i) \right)  \right] \\
&=&    \frac 1 2 \sum_i \left[\psi'(z_i) - \zi \psi\left( \frac 1 {\zi}\right) +  \zi\left( \psi(z_i) - z_i \psi'(z_i) \right)  \right]\\
&& +   \frac 1 2 \sum_i \left[\psi'(\zi) - z_i \psi\left( \frac 1 {z_i}\right) +  z_i\left( \psi(\zi) - \zi \psi'(\zi) \right)  \right].\\
\end{eqnarray*}
In the first identity, we used the permutation property of the map $i \mapsto i'$ and its consequence $\sum_i = \sum_{i'}$. 
In the second identity, we used $i''=i$.
Now, we employ the definitions 
$
\widetilde{\psi}(x,y)= \left[\psi'(x) + \psi'(y) \right](1-xy) + x [\psi(y) - \psi(1/x)] + y [\psi(x) - \psi(1/y)]
$
and
$
C_\psi = \inf_{x,y>0} {\widetilde{\psi}(x,y)}/{(\psi(x) + \psi(y) - 2\psi(1))^2}
$
from Definition \ref{DCP} to obtain
\begin{eqnarray*}
\ldots&=& \frac 1 2  \sum_i \widetilde{\psi}(z_i, \zi) \geq \frac 1 2  \sum_i C_\psi \left[ \psi(z_i) + \psi(\zi) \right]^2  \\
&\geq& \frac {C_\psi}{2D} \left[ \sum_i   \psi(z_i) + \psi(\zi) \right]^2  =  \frac {C_\psi}{2D} \left[ 2 \Delta^\psi f (v) \right]^2\\
&=&  \frac { 2 C_\psi}{D} \left[ \Delta^\psi f (v) \right]^2.\\
\end{eqnarray*}

This finishes the proof since $v\in V$ and $f\in C^+(V)$ are arbitrary.  
\end{proof}

  \subsection{Special cases for the function $\psi$}   \label{SSC}
One objective of this subsection is to show that the Harnack inequality on Ricci-flat graphs established in this article is stronger than the one given in \cite[Corollary 5.2]{Bauer2013}.
Moreover, we will discuss that there is a break of analogy in the curvature-dimension condition introduced in \cite{Bauer2013} compared to the $CD$ condition on manifolds. 
From the perspective of our approach, the authors of \cite{Bauer2013} consider the instance $\psi=\sqrt{\cdot}$.
We will give examples for the constants $C_\psi$ and $H_\psi$. Especially, we are interested in the cases $\psi = \log$ and $\psi = \sqrt{\cdot}$.
Furthermore, we will give useful criteria, whether $C_\psi=0$ respectively $H_\psi=\infty$. These cases should be avoided since then, the $CD\psi$ condition for Ricci-flat graphs, respectively the Harnack inequality, degenerates. We will see that $C_\psi = 0$ if $\psi$ does not satisfy a certain symmetry property. Moreover, we will see that $0 < H_\psi < \infty$ if $\psi$ is concave and $\psi''(1)<0$. First, we discuss the constant $H_\psi$ and next, the constant $C_\psi$. Then, we discuss the break of analogy in \cite{Bauer2013} and finally, we give the comparison between the Harnack inequalities.

\begin{lemma}[Degeneration of $H_\psi$] \label{LDH}
Let $\psi \in C^1(\posR)$. 
		If $\psi$ is concave and $\psi''(1)<0$, then $0< H_\psi < \infty$.

\end{lemma}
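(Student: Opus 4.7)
The plan is to analyze the continuous function $F(x) := (\log x)^2/\overline{\psi}(x)$ on $(1,\infty)$, establishing positivity and then boundedness of its supremum. First I would check that $\overline{\psi}(x)>0$ strictly for every $x>1$, so that $F$ is well-defined. Concavity of $\psi$ gives the tangent-line bound $\psi(x)\le \psi(1)+\psi'(1)(x-1)$, i.e. $\overline{\psi}\ge 0$ on $\posR$. If $\overline{\psi}(x_0)=0$ for some $x_0>1$, then the concave graph of $\psi$ would touch its tangent line at $1$ also at $x_0$, and concavity forces $\psi$ to agree with that tangent line on all of $[1,x_0]$; but this contradicts $\psi''(1)<0$. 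Positivity of $H_\psi$ is then immediate since $F$ takes strictly positive finite values on $(1,\infty)$.

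For $H_\psi<\infty$, the idea is to show that $F$ has finite limits at the two endpoints of $(1,\infty)$, together with continuity in between. Continuity is automatic from continuity of $\log$, $\overline{\psi}$, and the strict positivity of the denominator. At $x\to 1^+$ I would Taylor-expand: $\overline{\psi}(1)=0$, $\overline{\psi}'(x)=\psi'(1)-\psi'(x)$ so $\overline{\psi}'(1)=0$, and $\overline{\psi}''(1)=-\psi''(1)>0$. Hence $\overline{\psi}(x)=\tfrac{-\psi''(1)}{2}(x-1)^2+o((x-1)^2)$, while $(\log x)^2=(x-1)^2+O((x-1)^3)$, giving the finite limit
\[
\lim_{x\downarrow 1}F(x)=\frac{-2}{\psi''(1)}.
\]

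The step I expect to be the main obstacle is controlling $F$ as $x\to\infty$, where I need a lower bound on the growth of $\overline{\psi}$. Here I would exploit $\psi''(1)<0$ together with global concavity as follows. Since $\psi'$ is differentiable at $1$ with $\psi''(1)<0$, there exists $\delta>0$ with $\psi'(1+\delta)<\psi'(1)$. Concavity of $\psi$ makes $\psi'$ non-increasing, so $\psi'(x)\le \psi'(1+\delta)$ for all $x\ge 1+\delta$, which yields
\[
\overline{\psi}'(x)=\psi'(1)-\psi'(x)\ge c:=\psi'(1)-\psi'(1+\delta)>0\quad(x\ge 1+\delta).
\]
Integrating gives $\overline{\psi}(x)\ge c(x-1-\delta)$ for large $x$, so $\overline{\psi}$ grows at least linearly while $(\log x)^2$ grows only poly-logarithmically, forcing $F(x)\to 0$. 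Since $F$ is continuous on $(1,\infty)$ with finite limit at $1^+$ and limit $0$ at $\infty$, it is bounded, so $H_\psi<\infty$, completing the proof.
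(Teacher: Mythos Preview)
Your argument is correct and follows essentially the same route as the paper: show $\overline{\psi}>0$ on $(1,\infty)$, compute the limit of $(\log x)^2/\overline{\psi}(x)$ at $x\downarrow 1$ and show it tends to $0$ at $x\to\infty$ via linear growth of $\overline{\psi}$, then invoke continuity to conclude the supremum is finite and positive. The only cosmetic differences are that the paper uses l'H\^opital's rule where you use a Taylor expansion (both yield the limit $-2/\psi''(1)=2/\overline{\psi}''(1)$), and you spell out the linear lower bound $\overline{\psi}(x)\ge c(x-1-\delta)$ in more detail than the paper's one-line claim $\overline{\psi}(x)\ge Cx$ for large $x$.
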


\begin{proof}
To prove the lemma, we recall the definition of $H_\psi$,
\[
H_\psi = \sup_{x>1} \frac{(\log x)^2}{\overline{\psi}(x)}
\]
with
\[
\overline{\psi}(x)= \psi'(1)\cdot(x-1)  - (\psi(x) - \psi(1)). 
\]
By assumption, $\psi$ is concave and $\psi''(1)<0$. Thus, we also see that $\overline{\psi}$ is concave and $\overline{\psi}''(1)>0$. Additionally, we have $\overline{\psi}'(1)=0$.
Hence, we see $\overline{\psi}(x) > 0$ for $x>1$ and $\overline{\psi}(x) \geq C x$ for some $C>0$ and large $x$. Consequently,
\[
\lim_{x\to \infty} \frac{(\log x)^2}{\overline{\psi}(x)} = 0.
\]
By l'Hopital's rule, we obtain
\[
\lim_{x\to 1} \frac{(\log x)^2}{\overline{\psi}(x)} = \frac 2 {\overline{\psi}''(1)} > 0.
\]
Thus, $H_\psi>0$.

The previous observations guarantee that the function $[1,\infty] \to \IR$, $x \mapsto \frac{(\log x)^2}{\overline{\psi}(x)}$ is continuous. Thus, it attains its maximum and hence, $H_\psi < \infty$.
\end{proof}

\begin{example}[$H_{\log}$ and $H_{\sqrt{\cdot}}$] 
We will prove the identities
\begin{eqnarray*}
H_{\log} &=& 2, \\
H_{\sqrt{\cdot}} &=& 8. \\
\end{eqnarray*}

\begin{proof} 
We start with $H_{\log} = 2$.

The function $(1,\infty) \to \IR$ with $x \mapsto \frac{(\log x)^2}{\overline{\log}(x)}$ is monotonically non-increasing. Thus,
\[
H_{\log} = \sup_{x>1} \frac{(\log x)^2}{\overline{\log}(x)} = \lim_{x\to 1} \frac{(\log x)^2}{\overline{\log}(x)} =  \frac 2 {\overline{\log}''(1)} = 2.
\]

We prove $H_{\sqrt{\cdot}} = 8$.

The function $(1,\infty) \to \IR$ with $x \mapsto \frac{(\log x)^2}{\overline{\sqrt{\cdot}}(x)}$ is monotonically non-increasing. Thus,
\[
H_{\sqrt{\cdot}} = \sup_{x>1} \frac{(\log x)^2}{\overline{\sqrt{\cdot}}(x)} = \lim_{x\to 1} \frac{(\log x)^2}{\overline{\sqrt{\cdot}}(x)} =  \frac 2 {\overline{\sqrt{\cdot}}''(1)} = 8.
\]
This finishes the proof.
\end{proof} 
\end{example}

\begin{lemma}[Degeneration of $C_\psi$] 
Let $\psi \in C^1(\posR)$. 
\eBr
	\item 
	  If $\psi$ is concave, then $C_\psi \geq 0$.
	\item 
		If $\psi(x)+\psi(1/x) \neq 2 \psi(1)$ for some $x>0$, then $C_\psi \leq 0$.
\end{enumerate}
\end{lemma}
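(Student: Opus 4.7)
For part (1), I plan to establish the stronger pointwise statement $\widetilde\psi(x,y)\ge 0$ for every $x,y>0$ whenever $\psi$ is concave; this immediately yields $C_\psi\ge 0$ since the denominator in the definition of $C_\psi$ is a square. The main tool is the tangent-line characterization of concavity, $\psi(b)-\psi(a)\le \psi'(a)(b-a)$. Applied with $a=y$, $b=1/x$ and rearranged, this gives $\psi(y)-\psi(1/x)\ge \psi'(y)(y-1/x)$, so multiplying by $x>0$ yields $x[\psi(y)-\psi(1/x)]\ge \psi'(y)(xy-1)$; symmetrically $y[\psi(x)-\psi(1/y)]\ge \psi'(x)(xy-1)$. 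Substituting both lower bounds into the definition of $\widetilde\psi$ produces
$$\widetilde\psi(x,y)\ge [\psi'(x)+\psi'(y)](1-xy)+[\psi'(x)+\psi'(y)](xy-1)=0,$$
which is the desired claim.

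For part (2), my plan is to exploit the antidiagonal $y=1/x$. A direct substitution shows that $\widetilde\psi(x,1/x)=0$ for every $x>0$, because simultaneously $1-xy=0$, $\psi(y)-\psi(1/x)=0$, and $\psi(x)-\psi(1/y)=0$. By hypothesis there exists $x_0>0$ with $\psi(x_0)+\psi(1/x_0)\neq 2\psi(1)$, so the denominator $(\psi(x_0)+\psi(1/x_0)-2\psi(1))^2$ is strictly positive while the numerator vanishes at $(x_0,1/x_0)$. Hence the ratio at that point equals zero, forcing $C_\psi\le 0$.

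There is no real obstacle here; the argument rests on two observations. First, $\widetilde\psi$ splits into two pieces, each of which is handled by a single tangent-line inequality (one based at $y$, one based at $x$), and the resulting lower bounds cancel exactly against the $[\psi'(x)+\psi'(y)](1-xy)$ prefactor. Second, the antidiagonal $y=1/x$ is simultaneously a zero of all three building blocks of $\widetilde\psi$, which makes it the natural place to detect non-positivity of $C_\psi$ whenever the denominator is not forced to vanish there as well.
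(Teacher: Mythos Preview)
Your argument is correct and follows essentially the same route as the paper. For part (1) the paper rewrites $\widetilde\psi(x,y)$ as $y[\psi'(x)(1/y-x)+\psi(x)-\psi(1/y)]+x[\psi'(y)(1/x-y)+\psi(y)-\psi(1/x)]$ and notes each bracket is non-negative by concavity; this is algebraically identical to your two tangent-line bounds and their cancellation against $[\psi'(x)+\psi'(y)](1-xy)$. Part (2) is verbatim the paper's argument via the antidiagonal $y=1/x$.
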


\begin{proof}
We recall the definition of $C_\psi$,

\[
C_\psi = \inf_{x,y>0} \frac{\widetilde{\psi}(x,y)}{(\psi(x) + \psi(y) - 2\psi(1))^2}
\]
with
\[
\widetilde{\psi}(x,y)= \left[\psi'(x) + \psi'(y) \right](1-xy) + x [\psi(y) - \psi(1/x)] + y [\psi(x) - \psi(1/y)].
\]
First, we prove \emph{(1)}.
It is sufficient to show $\widetilde{\psi}(x,y) \geq 0$ for all $x,y>0$. For $x,y>0$, we can write 
\[
\widetilde{\psi}(x,y) = y \left[\psi'(x) \left(\frac 1 y - x \right) + \psi(x) - \psi\left( \frac 1 y \right)  \right]
											+	x \left[\psi'(y) \left(\frac 1 x - y \right) + \psi(y) - \psi\left( \frac 1 x \right)  \right].
\]
Since $\psi$ is concave by assumption, we obtain $\psi'(x) \left(\frac 1 y - x \right) + \psi(x) - \psi\left( \frac 1 y \right) \geq 0$ for all $x,y>0$.
Consequently, $\widetilde{\psi}(x,y) \geq 0$ for all $x,y>0$.

Next, we show \emph{(2)}.
We observe $\widetilde{\psi}(x,1/x)=0$ for all $x>0$.
By the assumption, there exists $x>0$, such that $\psi(x)+\psi(1/x) \neq 2 \psi(1)$. 
Hence,
\[
C_\psi \leq  \frac{\widetilde{\psi}(x,1/x)}{(\psi(x) + \psi(1/x) - 2\psi(1))^2} =0.
\]
This finishes the proof.
\end{proof}

\begin{rem}
It would be interesting to know whether the properties concavity of $\psi$ and $\psi(x) + \psi(1/x)= 2\psi(1)$ for all $x>0$, already characterize the case $C_\psi>0$.
\end{rem}

\begin{example}[$C_{\log}$ and $C_{\sqrt{\cdot}}$] 
We will prove 
\begin{eqnarray*}
\frac 1 2 \leq C_{\log} &\leq& 1, \\
C_{\sqrt{\cdot}}&=& 0. \\
\end{eqnarray*}

\begin{rem}
Numerical computations via Mathematica \cite{Wolfram2007} have shown that
$$C_{\log} \approx 0.795.$$
There seems to be no analytic expression for $C_{\log}$.

\end{rem}
\begin{proof} 
First, we show $C_{\sqrt{\cdot}}= 0$.

By the degeneration lemma of $C_\psi$, we obtain $C_{\sqrt{\cdot}}= 0$, since the square root does not satisfy the symmetry condition $\psi(x)+\psi(1/x)=2\psi(1)$ for $x>0$.

Next, we show $\frac 1 2 \leq C_{\log} \leq 1$.

For $x,y>0$ and $\psi = \log$, we observe 
\[
\frac{\widetilde{\psi}(x,y)}{(\psi(x) + \psi(y) - 2\psi(1))^2} = (x+y)\frac{ \frac 1 {xy} - 1  + \log xy}{(\log xy)^2} \geq 2 \sqrt{xy} \frac{ \frac 1 {xy} - 1  + \log xy}{(\log xy)^2}.
\]
The inequality is sharp if $x=y$. Hence, we have
\[
C_{\log} = \inf_{x,y>0} \frac{\widetilde{\psi}(x,y)}{(\psi(x) + \psi(y) - 2\psi(1))^2} = \inf_{z>0} 2 \sqrt{z} \frac{ \frac 1 {z} - 1  + \log z}{(\log z)^2}.
\]

Denote $\varphi(\log x) := \frac {\sqrt{x} (  1/x - 1 + \log x)} {(\log x)^2}$ for $x>0$ and $x\neq 1$.
Then, we obtain
\[
\varphi(x) = \frac {e^{x/2}\left( e^{-x} - 1 + x  \right)}{x^2}
\]
and $C_{\log} = 2 \inf_{x\neq 0} \varphi(x)$.

By l'Hopital's rule, we see the upper bound
\[
\lim_{x\to 0} \varphi(x) = \lim_{x\to 0} \frac {\ddx \left( e^{-x} - 1 + x \right)} {\ddx \left( x^2\right)} = \lim_{x\to 0} \frac {-e^{-x} + 1 } {2x} = \frac 1 2 .
\]
Consequently, $ C_{\log}  \leq 1$.

To prove the lower bound, we write
\begin{eqnarray*}
\varphi(x)  &=& \frac {e^{x/2}\left( e^{-x} - 1 + x  \right)}{x^2}\\
						&=&	\frac {e^{x/2}\left( e^{-x} - 1 + x  \right)}{(e^{x/4} - e^{-x/4})^2} \cdot \frac {(e^{x/4} - e^{-x/4})^2} {x^2} \\
						&=& \frac { e^{-x} - 1 + x  }{(1 - e^{-x/2})^2} \cdot \left( \frac {e^{x/4} - e^{-x/4}} {x} \right)^2.
\end{eqnarray*}
Since $e^{-x/2}\geq 1 - \frac x 2$ and consequently 
\begin{eqnarray*}
 e^{-x} - 1 + x &=&    (e^{-x} + 1)- 2\left(1 - \frac x 2\right)   \\  &\geq& (e^{-x} + 1) - 2e^{-x/2}  \\ &=& (1 - e^{-x/2})^2 
\end{eqnarray*}
we get 
\[
  \frac { e^{-x} - 1 + x  }{(1 - e^{-x/2})^2} \geq 1.
\]

On the other hand, we can compute
\begin{eqnarray*}
\frac {e^{x/4} - e^{-x/4}} {x} &=& \frac 1 x \left[ \sum_{k\geq 0} \frac {\left( x / 4\right)^k}{ k!} - \frac {\left(- x / 4\right)^k} { k!} \right] \\
&=& \frac 2 x \sum_{j \geq 0} \frac {\left( x / 4\right)^{2j+1}}{ (2j+1)!} \\
&=&  \sum_{j \geq 0}  \frac {2x^{2j} }{ (2j+1)! \cdot 4^{2j+1}} \\
&\geq&    \left[ \frac {2x^{2j} }{ (2j+1)! \cdot 4^{2j+1}} \right]_{j=0} \\
&=& \frac 1 2 .
\end{eqnarray*}

Putting these estimates together yields the lower bound of $C_{\log}$
\begin{eqnarray*}
\varphi(x)&=&     \frac { e^{-x} - 1 + x  }{(1 - e^{-x/2})^2} \cdot \left( \frac {e^{x/4} - e^{-x/4}} {x} \right)^2 \\
					&\geq&  1 \cdot \left(\frac 1 2 \right)^2 = \frac 1 4
\end{eqnarray*}
for all $x \neq 0$ and hence, $C_{\log} \geq \frac 1 2$.
\end{proof}

\end{example}

Since $C_{\sqrt{\cdot}}=0$, the $CD\sqrt{\cdot} $ condition degenerates for Ricci-flat graphs. In \cite{Bauer2013}, this problem has been solved by breaking the analogy to the manifolds case.
More specifically, they introduced a weaker form of the $CD\sqrt{\cdot} $ inequality which requires $\Gamma_2^{\sqrt{\cdot}} (f)(x) \geq \frac 1 d \Delta^{\sqrt{\cdot}} f (x)$ only if $\Delta^{\sqrt{\cdot}}f(x) \leq 0$ for all $f \in C^+(V)$ and $x \in V$ with a graph $G=(V,E)$. This additional condition $\Delta^{\sqrt{\cdot}}f(x) \leq 0$ is the break of analogy. There seems to be no possibility to use the semigroup methods from \cite{Bakry2006} to derive a $\psi$-Li Yau inequality from a weak $CD \psi$ condition. But nevertheless, this weak $CD \psi$ condition is sufficient to prove Li-Yau type gradient estimates via the maximum principle.

\subsection{Harnack inequalities on Ricci-flat graphs}

A remarkable result of \cite{Bauer2013} is the Harnack inequality on Ricci-flat graphs.
This states the following. If $G = (V,E)$ is a $D$-Ricci-flat graph, then one has
\[
\log \frac{u(x,T_1)}{u(y,T_2)} \leq D \log \frac{T_2}{T_1} + \frac{4d(x,y)^2}{T_2-T_1} 
\]
for all positive solutions $\sol$ to the heat equation, for all $x,y \in V$ and for all positive $T_1<T_2$.

As claimed in the introduction, we achieve an improvement for Ricci-flat graphs.

\begin{corollary}[Harnack inequality for Ricci-flat graphs] \label{CHR} 
Let $\psi \in C^1(\posR)$ be concave and let $D \in \IN$. If $G = (V,E)$ is a $D$-Ricci-flat graph, then we have
\[
\log \frac{u(x,T_1)}{u(y,T_2)} \leq \frac D {2  C_{\psi}} \log \frac{T_2}{T_1} + \frac{H_\psi d(x,y)^2}{T_2-T_1} 
\]
for all positive solutions $\sol$ to the heat equation, for all $x,y \in V$ and for all positive $T_1<T_2$.
\end{corollary}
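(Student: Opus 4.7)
The plan is to chain together three of the major results already established earlier in the paper: the fact that Ricci-flat graphs satisfy a $CD\psi$ inequality (Theorem \ref{TRicci}), the $\psi$-Li-Yau inequality extracted from $CD\psi$ (Theorem \ref{TPLY}), and the Harnack inequality derived from a $\psi$-gradient estimate (Theorem \ref{THarnack}). The gradient representation of $\Delta^\psi$ (Lemma \ref{LGR}) is the bridge between the pointwise Li-Yau estimate and the hypothesis of the Harnack theorem.

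Concretely, I would proceed as follows. First, since $G$ is $D$-Ricci-flat and $\psi$ is concave with $C_\psi>0$ (the case $C_\psi=0$ makes the asserted bound trivially infinite, so assume otherwise), Theorem \ref{TRicci} yields that $G$ satisfies $CD\psi(D/C_\psi,0)$. Second, apply Theorem \ref{TPLY} to the positive heat solution $u$ to obtain the pointwise bound
\[
-\Delta^\psi u \leq \frac{D}{2C_\psi\,t}.
\]
Third, use the gradient representation $-\Delta^\psi u = \Gamma^\psi(u) - \psi'(1) \tfrac{\Delta u}{u}$ from Lemma \ref{LGR}, together with $\Delta u = \partial_t u$ for a solution of the heat equation, to rewrite this as the $\psi$-gradient estimate
\[
\Gamma^\psi(u) - \partial_t \log u \leq \frac{D}{2C_\psi\,t},
\]
under the (standard) normalization $\psi'(1)=1$; for general $\psi$ one absorbs $\psi'(1)$ into the constants $D_1,D_2$ of the Harnack theorem without changing the argument.

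Finally, apply Theorem \ref{THarnack} with $D_1=1$, $D_2=D/(2C_\psi)$, and $D_3=0$, which gives
\[
\frac{u(x,T_1)}{u(y,T_2)} \leq \left(\frac{T_2}{T_1}\right)^{D/(2C_\psi)} \exp\!\left(\frac{H_\psi\,d(x,y)^2}{T_2-T_1}\right),
\]
and taking logarithms yields exactly the stated bound. There is no real obstacle; the only delicate point is the bookkeeping of the constant $\psi'(1)$, which disappears under the normalization that is implicit in the statement, and the assumption $C_\psi>0$, which is needed so that Theorem \ref{TRicci} produces a finite dimension bound. Everything else is a mechanical concatenation of the three theorems.
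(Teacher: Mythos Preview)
Your proposal is correct and follows essentially the same route as the paper: combine Theorem~\ref{TRicci} (Ricci-flat $\Rightarrow$ $CD\psi(D/C_\psi,0)$) with the $\psi$-Li-Yau inequality and then feed the resulting gradient estimate into Theorem~\ref{THarnack}. The paper's proof is terser---it just cites Theorems~\ref{THarnack} and \ref{TRicci}---but the intermediate steps you spell out (Theorem~\ref{TPLY}, Lemma~\ref{LGR}, the normalization $\psi'(1)=1$) are exactly the ones implicitly used, and your remark about the degenerate case $C_\psi=0$ is a sensible clarification.
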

\begin{proof}
This is an easy consequence of the Harnack inequality (Theorem \ref{THarnack}) and the $CD\psi$ condition for Ricci-flat graphs (Theorem \ref{TRicci}). 
\end{proof}

If we choose $\psi = \log$, then by using 
$C_{\log} \geq \frac 1 2$ and $H_{\log}=2$ (cf. Subsection \ref{SSC}), we obtain
\[
\log \frac{u(x,T_1)}{u(y,T_2)} \leq  D \log \frac{T_2}{T_1} + \frac{2 d(x,y)^2}{T_2-T_1} .
\]
Using the numerical result $C_{\log} \approx 0.795$, the previous corollary improves to
\[
\log \frac{u(x,T_1)}{u(y,T_2)} \leq  0.629 D \log \frac{T_2}{T_1} + \frac{2 d(x,y)^2}{T_2-T_1} .
\]

This means, our upper bound is by a factor of 1.59 smaller than the one obtained in \cite{Bauer2013}.


Florentin Münch,  \\
Mathematisches Institut, Friedrich Schiller Universität Jena, \\
D-07745 Jena, Germany\\
\texttt{florentin.muench@uni-jena.de}

\end{document}